\documentclass[letterpaper, 11pt,  reqno]{amsart}

\usepackage[margin=1.2in,marginparwidth=1.5cm, marginparsep=0.5cm]{geometry}

\usepackage{amsmath, mathrsfs, amssymb,amscd,amsthm,amsxtra, esint}
\usepackage{color}

\usepackage[implicit=true]{hyperref}

\usepackage{cases}
\allowdisplaybreaks[2]

\definecolor{gr}{rgb}   {0.,   0.69,   0.23 }
\definecolor{bl}{rgb}   {0.,   0.5,   1. }
\definecolor{mg}{rgb}   {0.85,  0.,    0.85}
\definecolor{yl}{rgb}   {0.8,  0.7,   0.}
\definecolor{or}{rgb}  {0.7,0.2,0.2}

\definecolor{olive}{rgb}{0.3, 0.4, .1}
\definecolor{fore}{RGB}{249,242,215}
\definecolor{back}{RGB}{51,51,51}
\definecolor{title}{RGB}{255,0,90}
\definecolor{dgreen}{rgb}{0.,0.6,0.}
\definecolor{gold}{rgb}{1.,0.84,0.}
\definecolor{JungleGreen}{cmyk}{0.99,0,0.52,0}
\definecolor{BlueGreen}{cmyk}{0.85,0,0.33,0}
\definecolor{RawSienna}{cmyk}{0,0.72,1,0.45}
\definecolor{Magenta}{cmyk}{0,1,0,0}
\definecolor{dorange}{RGB}{154,118,0}
\definecolor{corange}{RGB}{230, 176, 0}

\newtheorem{theorem}{Theorem} [section]

\newtheorem{lemma}[theorem]{Lemma}
\newtheorem{proposition}[theorem]{Proposition}
\newtheorem{remark}[theorem]{Remark}

\newtheorem{corollary}[theorem]{Corollary}

\DeclareMathOperator*{\intt}{\int}

\DeclareMathOperator*{\supp}{supp}

\DeclareMathOperator{\HS}{HS}

\DeclareMathOperator{\sgn}{sgn}

\newcommand{\noi}{\noindent}

\newcommand{\R}{\mathbb{R}}

\newcommand{\T}{\mathbb{T}}
\newcommand{\bul}{\bullet}

\newcommand{\PP}{\mathbb{P}}

\newcommand{\E}{\mathbb{E}}

\newcommand{\F}{\mathcal{F}}

\newcommand{\NB}{\mathbb{N}}

\newcommand{\dl}{\delta}

\newcommand{\nb}{\nabla}

\newcommand{\eps}{\varepsilon}

\newcommand{\s}{\sigma}
\newcommand{\Si}{\Sigma}
\newcommand{\ft}{\widehat}

\newcommand{\wt}{\widetilde}

\newcommand{\dx}{\partial_x}
\newcommand{\dt}{\partial_t}
\newcommand{\dd}{\partial}

\newcommand{\ta}{\theta}

\renewcommand{\o}{\omega}

\newcommand{\FL}{\mathcal{F}L} 

\newcommand{\les}{\lesssim}
\newcommand{\ges}{\gtrsim}

\newcommand{\jb}[1]
{\langle #1 \rangle}

\newcommand{\ind}{\mathbf 1}

\newcommand{\N}{\mathbb{N}}

\newcommand{\cX}{\mathcal{X}}
\newcommand{\cY}{\mathcal{Y}}

\newcommand{\Om}{\Omega}

\newtheorem*{ackno}{Acknowledgements}

\numberwithin{equation}{section}
\numberwithin{theorem}{section}

\makeatletter
\@namedef{subjclassname@2020}{\textup{2020} Mathematics Subject Classification}
\makeatother

\begin{document}
\baselineskip = 14pt

\title[UU for SKdV on $\R$]
{Unconditional well-posedness of the stochastic Korteweg-de Vries equation on the real line}

\author[D.~Greco, 
T.~Oh, and
K.~Tsugawa]
{Damiano Greco, 
Tadahiro Oh, 
and Kotaro Tsugawa}

\address{
Damiano Greco\\School of Mathematics\\
The University of Edinburgh\\
and The Maxwell Institute for the Mathematical Sciences\\
James Clerk Maxwell Building\\
The King's Buildings\\
Peter Guthrie Tait Road\\
Edinburgh\\ 
EH9 3FD\\
 United Kingdom}

\email{dgreco@ed.ac.uk}

%
%
%

%
\address{
Tadahiro Oh, 
School of Mathematics\\
The University of Edinburgh\\
and The Maxwell Institute for the Mathematical Sciences\\
James Clerk Maxwell Building\\
The King's Buildings\\
Peter Guthrie Tait Road\\
Edinburgh\\
EH9 3FD\\
 United Kingdom,
and  School of Mathematics and Statistics, Beijing Institute of Technology, Beijing 100081, China}

\email{hiro.oh@ed.ac.uk}

\address{Kotaro Tsugawa, 
Department of Mathematics, Faculty of Science and Engineering,
Chuo University, Bunkyo-ku, Tokyo, 112-8551, Japan}
 
 \email{tsugawa@math.chuo-u.ac.jp}

\subjclass[2020]{35Q53, 35R60, 60H15}

\keywords{stochastic Korteweg-de Vries equation;  Korteweg-de Vries equation;
uniqueness; Fourier-Lebesgue space}

\begin{abstract}

We study well-posedness issues 
of the stochastic Korteweg-de Vries equation (SKdV)
with an additive noise, posed 
on the real line.
By using the Fourier restriction norm method
adapted to the Fourier-Lebesgue space in time, 
we first prove global well-posedness
of SKdV
in $L^2(\R)$ 
without assuming the homogenous Sobolev
regularity, 
which was imposed in a work 
by de Bouard, Debussche, 
and Tsutsumi (1999).
Then, by adapting the argument by Zhou (1997)
to the stochastic setting, 
we
prove optimal pathwise unconditional uniqueness 
for SKdV 
in $L^2(\R)$.
In the appendix, we present a short argument
for proving  boundedness of the multiplication by a sharp cutoff function
in the  Fourier-Lebesgue and Sobolev spaces, 
which is of interest in its own right.

\end{abstract}

%
\maketitle
%


\tableofcontents



\newpage

\section{Introduction}
\label{SEC:1}

We consider 
the following stochastic Korteweg-de Vries equation (SKdV) with an additive noise, 
posed on the real line:
\begin{align}
\begin{cases}
\dt u + \dx^3 u + \dx (u^2) = \phi \xi\\
u|_{t = 0} = u_0, 
\end{cases}
\label{KDV1}
\end{align}

\noi
where 
\begin{align}
\phi \in \HS(L^2(\R); L^2(\R))
\label{HS1}
\end{align}

\noi
and $\xi$ is a (Gaussian) space-time white noise on $\R\times \R$
on a probability space $(\Om, \F, \PP)$
whose space-time covariance is (formally) given by 
\begin{align*}
 \E[ \xi(t_1, x_1)\xi(t_2, x_2) ] = \dl(t_1 - t_2) \dl (x_1 - x_2)
\end{align*} 

\noi
for $t_1, t_2 , x_1, x_2 \in \R$
with $\dl$ denoting the Dirac delta function.

Let us first go over the known well-posedness
results for 
the following (deterministic)
Korteweg-de Vries equation (KdV) on the real line:
\begin{align}
\dt u + \dx^3 u + \dx (u^2) =0, 
\label{KDV2}
\end{align}

\noi
corresponding to \eqref{KDV1} with $\phi = 0$.
In 
\cite{BO93}, Bourgain introduced 
the Fourier restriction norm method, involving the so-called $X^{s, b}$-spaces
defined via the norm: 
\begin{align*}
\| u\|_{X^{s,b}}=\| \jb{\xi}^s\jb{\tau -\xi^3 }^b\ft u (\tau, \xi)\|_{L^2_{ \tau, \xi} (\R\times \R)}, 
\end{align*}

\noi
and proved local well-posedness of \eqref{KDV2} in $L^2(\R)$, 
which immediately extended to global well-posedness
in $L^2(\R)$ thanks to the $L^2$-conservation for \eqref{KDV2}.
This result 
was then extended to $H^s(\R)$, 
$s > -\frac 34$, 
locally in time by 
Kenig, Ponce, and Vega~\cite{KPV96}
and globally in time 
by Colliander, Keel, Staffilani, Takaoka, and Tao~\cite{CKSTT03}.
In particular, in \cite{KPV96}, 
the authors proved the following bilinear estimate:
\begin{align}
\| \dx (u_1u_2)\|_{X^{s, b-1}}\les 
\prod_{j = 1}^2 \| u_j\|_{X^{s, b}}
\label{bilin0x}
\end{align}

\noi
for $s > - \frac 34$ with some $b > \frac 12$.
See \cite{NTT, Kishi, Guo}
for results on the endpoint case $s = -\frac 34$.
In a recent work 
 \cite{KV}, 
 Killip and Vi\c{s}an introduced the method of commuting flows, 
 exploiting the complete integrability of the equation, 
 and proved global well-posedness of \eqref{KDV2}
 in $H^{-1}(\R)$, which is sharp in view of  ill-posedness
 of \eqref{KDV2} in $H^s(\R)$, $s < -1$; see  \cite{Moli}.
We also mention~\cite{Tsu1} 
by the third author on well-posedness 
of KdV \eqref{KDV2}
with a (deterministic, time-independent) forcing term.

Let us now turn our attention to SKdV \eqref{KDV1}.
We say that $u$ is a solution to \eqref{KDV1}
if $u$ satisfies the following Duhamel formulation (= mild formulation):
\begin{align}
u(t) = S(t) u_0 - \int_0^t S(t-t') \dx (u^2)(t') dt' + \Psi, 
\label{mild1}
\end{align}

\noi
where $S(t) = e^{-t\dx^3}$ denotes
the linear KdV propagator
and $\Psi$ denotes the stochastic convolution: 
\begin{align}
\Psi(t)  = \int_0^t S(t-t') \phi \xi(dt'), 
\label{sto1}
\end{align}

\noi
representing the effect of the stochastic forcing in \eqref{KDV1}.
See Subsection \ref{SUBSEC:sto} for a precise meaning of
the stochastic integral on the right-hand side of \eqref{sto1}.
In \cite{DDT1}, 
de Bouard, Debussche, and Tsutsumi studied 
well-posedness issues of 
SKdV \eqref{KDV1}
by adapting the Fourier restriction norm method
(in particular, \cite{KPV93})
to the stochastic setting.
In particular, 
they 
 proved local well-posedness of~\eqref{KDV1}
in $H^s(\R)$
for $- \frac 58 < s \le 0$, 
 by assuming 
\begin{align}
\phi \in \HS(L^2(\R); H^s(\R))\cap
\HS(L^2(\R); \dot H^{ - \frac 38}(\R)).
\label{HS2}
\end{align}

\noi
They also proved global well-posedness
of \eqref{KDV1} in $L^2(\R)$
under the assumption \eqref{HS2} (with $s = 0$).
See also \cite{DDT2, Oh4, OQS}
for the known well-posedness results of SKdV in the periodic setting.

Given initial data $u_0 \in H^s(\T)$, 
local well-posedness of \eqref{KDV1}
means that there exists a unique solution 
$u$ to \eqref{KDV1} (or rather to \eqref{KDV2})
in the class
$C([0, T]; H^s(\T))$
for some 
almost surely positive time  $T = T_\o$, 
where uniqueness may hold in a smaller space
and the solution map is continuous in a suitable sense.
As such, the minimal and natural regularity 
assumption on the operator $\phi$ is 
\begin{align}
\phi \in \HS(L^2(\R); H^s(\R)).
\label{HS4}
\end{align}

\noi
See Lemma \ref{LEM:sto1}\,(ii).
In particular, the condition
\begin{align}
\phi \in 
\HS(L^2(\R); \dot H^{ - \frac 38}(\R))
\label{HS3}
\end{align}

\noi
appearing in \eqref{HS2} is somewhat artificial.
Since the work \cite{DDT1}, 
the question on the necessity of  the condition \eqref{HS3}
remained open over two decades.
In our first main result, 
we 
 establish global well-posedness
of \eqref{KDV1} in $L^2(\R)$
under \eqref{HS1}, 
thus showing that 
 the condition 
 \eqref{HS3} 
  imposed in \cite{DDT1}
  is in fact not necessary, at least for $s\ge 0$; see Remark \ref{REM:neg}.

\begin{theorem}\label{THM:1}
Let $s\ge 0$.
Suppose that \eqref{HS4} holds.
Then, 
the stochastic KdV equation
\eqref{KDV1} is globally well-posed in $H^s(\R)$.

\end{theorem}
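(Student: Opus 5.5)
We split the solution as $u = \Psi + v$, where $v$ solves
\[
v(t) = S(t)u_0 - \int_0^t S(t-t')\dx\bigl((v+\Psi)^2\bigr)(t')\,dt',
\]
and we run a contraction argument for $v$ in a Fourier restriction space. The obstruction in \cite{DDT1} is that the stochastic convolution $\Psi$ does not belong to $X^{s,b}$ with $b>\frac12$. It only lies in $X^{s,b}$ with $b<\frac12$, and the low frequencies $|\xi|\les 1$ are where the condition \eqref{HS3} was used. Following the abstract, we replace the time-$L^2$ structure by a Fourier–Lebesgue one in the modulation variable. Concretely, we work with norms of the form
\[
\|u\|_{Y^{s,b}_p} = \bigl\| \jb{\xi}^s \| \jb{\tau-\xi^3}^b \ft u(\tau,\xi)\|_{L^p_\tau}\bigr\|_{L^2_\xi}
\]
with $p$ slightly larger than $1$ (or $p<2$ with $b$ chosen so that $bp'>1$). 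The endpoint scaling $b = 1-\frac1p$ then still embeds into $C_tH^s_x$, and the linear and Duhamel estimates hold after localizing in time by $\eta(t/T)$.

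\textbf{Step 1: stochastic convolution.} We show that $\eta\Psi \in Y^{s,b}_p$ almost surely under only \eqref{HS4}. Writing $\Psi=\sum_k \int_0^t S(t-t')\phi e_k\,d\beta_k(t')$, we compute the $L^p_\tau$ moment of the time-Fourier transform of $\eta(t)\int_0^t e^{i(t-t')\xi^3}d\beta_k$ at each fixed $\xi$, using Gaussian hypercontractivity and then Minkowski. In the variable $\tau-\xi^3$ the transform decays like $\jb{\tau-\xi^3}^{-1}$ times a random factor, which is in $L^p_\tau$ with weight $\jb{\cdot}^b$ provided $b<1-\frac1p$. The key point is that this bound is uniform in $\xi$, including near $\xi=0$, so only $\sum_k\|\phi e_k\|_{H^s}^2<\infty$ is needed. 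We will likely need $b$ slightly below the critical value for $\Psi$, which forces the nonlinear estimate to accept an input in a slightly weaker space.

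\textbf{Step 2: bilinear estimates (the main obstacle).} We need
\[
\|\dx(u_1u_2)\|_{Y^{s,b-1}_p}\les \|u_1\|_{\cX}\|u_2\|_{\cX},
\]
together with a small power of $T$, where $\cX$ is the space containing both $v$ and $\Psi$. We treat the regime $s=0$; the case $s>0$ follows by $\jb{\xi}^s\les\jb{\xi_1}^s+\jb{\xi_2}^s$. We proceed by dyadic decomposition in frequency and modulation, using the KdV resonance identity $\tau-\xi^3-(\tau_1-\xi_1^3)-(\tau_2-\xi_2^3) = -3\xi\xi_1\xi_2$. High-frequency interactions are handled by the Kenig–Ponce–Vega argument, adapted from $L^2_\tau$ to $L^p_\tau$ via Young/Hölder in $\tau$, which costs only $\eps$-losses that are absorbed by the strong smoothing available at $s=0>-\frac34$. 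The delicate case is when the output frequency $\xi$ or one input frequency is small, since then the resonance gives no gain and the derivative $\dx$ is weak. There we exploit the factor $|\xi|$ from $\dx$ together with an $L^1_\tau$-type control, since the $L^p_\tau$ norm with $p$ near $1$ is exactly what handles low frequencies without a $\dot H^{-3/8}$ assumption. We would first try Cauchy–Schwarz in $\xi_1$ on a bounded set combined with Young's inequality in $\tau$.

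\textbf{Step 3: local and global theory.} A contraction on a ball of $\cX$ gives local well-posedness on $[0,T_\o]$, with $T_\o$ depending on $\|u_0\|_{H^s}$ and $\|\Psi\|$. Continuity of the solution map follows from the same estimates. For globality we first treat $s=0$ using Itô's formula for $\|u\|_{L^2}^2$. Since $\int u\,\dx(u^2)\,dx=0$, this gives
\[
\E\sup_{t\le T}\|u(t)\|_{L^2}^2 \les \|u_0\|_{L^2}^2 + T\|\phi\|_{\HS(L^2;L^2)}^2 ,
\]
with the martingale term controlled by Burkholder–Davis–Gundy. To justify this we apply it to smooth approximations and pass to the limit through continuity of the solution map. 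Iterating the local theory then gives global existence, because the local time depends only on the $L^2$ norm. For $s>0$, persistence of regularity follows from the local theory at level $s$, since the local time can be taken to depend on the $L^2$ norm through the $\jb{\xi}^s$-splitting in the bilinear estimate.
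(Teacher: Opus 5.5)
Your Steps 1 and 3 are fine. They match Lemma~\ref{LEM:sto1}\,(i), the It\^o/BDG bound of Lemma~\ref{LEM:sto2}, and the blowup alternative. Step~2, however, fails because you move the time exponent in the wrong direction.

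As your Step~1 itself indicates, $\Psi\in X^{0,b}_p$ only when $b<1-\frac1p=\frac1{p'}$. For $p<2$ this forces $b<\frac12$ on the stochastic factor, and $b\approx 0$ if $p$ is near $1$. That is exactly the regime in which \cite{DDT1} needed \eqref{HS3}.

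No choice of $p$, of the weight on the other factor, or of the output weight can save a bilinear estimate with such an input. Take $\ft u_1=\ind_{\{|\xi_1-N|\le c,\ |\tau_1-\xi_1^3|\le 1\}}$ and $\ft u_2=\ind_{\{\xi_2\in[\dl,2\dl],\ |\tau_2-3N^2\xi_2|\le 1\}}$, with $\dl=c/N$ and $c>0$ small. Then $\s_2\sim N$. By $\tau-\xi^3=(\tau_1-\xi_1^3)+(\tau_2-\xi_2^3)-3\xi\xi_1\xi_2$, the output has modulation $O(1)$ on a set of size $\sim1$ near $\xi\approx N$, and there $\ft u_1*\ft u_2\ges\dl$. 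Hence $\|\dx(u_1u_2)\|_{X^{0,b_0}_p}\ges N\dl\sim1$, whereas $\|u_1\|_{X^{0,b'}_p}\|u_2\|_{X^{0,b}_p}\sim N^{b-\frac12}$, for every $p$, $b'$, $b_0$.

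This is the high$\times$low$\to$high obstruction, and no ``$L^1_\tau$-type control'' of low frequencies removes it. Two side claims are also wrong:
\begin{itemize}
\item the endpoint $b=1-\frac1p$ does not embed into $C_tH^s$, since that needs $bp'>1$ strictly;
\item the H\"older--Minkowski reduction suggested by the $L^2$ argument moves $L^p_\tau$ inside $L^2_{\xi_1}$, which needs $p\ge2$.
\end{itemize}

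The missing idea is to take $q>2$. Then $\frac1{q'}>\frac12$, and the Brownian constraint $b<\frac1{q'}$ leaves room for $b>\frac12$; the choice $b=\frac12+\eps$, $q=\frac2{1-3\eps}$ satisfies \eqref{bilin0}. H\"older in $(\tau_1,\xi_1)$ and Minkowski then reduce the estimate to bounding $\sup_{\tau,\xi}\bigl\||\xi|\s^{b_0}\s_1^{-b}\s_2^{-b}\bigr\|_{L^2_{\xi_1}L^{q'}_{\tau_1}}$.

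The $\tau_1$-integration gives $\jb{\mu}^{-(4b-\frac2{q'})}$ with $4b-\frac2{q'}>1$. The Kenig--Ponce--Vega change of variables $\mu=\tau-\xi^3+3\xi\xi_1\xi_2$ then closes with $b_0\le-\frac14$, uniformly in all frequencies, with no separate low-frequency case.

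Since $bq'<1$, continuity in time is not automatic. It is recovered by putting the Duhamel term in $X^{0,b_1}_q(T)$ with $\frac1{q'}<b_1\le\frac34-\eps$, using the same bilinear estimate with $b_0=b_1-1+\eps\le-\frac14$.
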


In adapting the Fourier restriction norm method
to the stochastic setting, using the $X^{s, b}$-space, 
we need to take $b < \frac 12$ to capture the 
temporal regularity 
of the stochastic convolution~$\Psi$ in~\eqref{sto1}
(= regularity of a Brownian motion).
By adapting the bilinear estimate \eqref{bilin0x}
to the case $b < \frac 12$, there is an issue
in treating the 
``high $\times$ low $\to$ high'' interaction.
This was the reason that the assumption \eqref{HS2} 
(in particular, \eqref{HS3})
was introduced in~\cite{DDT1};
see \cite[Remark~2.1]{DDT1}.

In proving Theorem \ref{THM:1}, 
the main task is 
to prove local well-posedness
of \eqref{KDV1} under~\eqref{HS4}
(namely, 
{\it without} the assumption \eqref{HS3}).
Once this is achieved, global well-posedness
follows from the standard a priori bound (Lemma \ref{LEM:sto2})
as in \cite{DDT1}.
Our approach for proving local well-posedness
 is based on the Fourier restriction norm method
adapted to the Fourier-Lebesgue spaces in time;
given $1 \le q \le \infty$ and $b \in \R$, we define the Fourier-Lebesgue space
$\F L^{b, q}(\R)$ by the norm:
\begin{align*}
\|f\|_{\F L^{b, q}} = \| \jb{\tau}^b \ft f(\tau)\|_{L^q}.
\end{align*}

\noi
See \eqref{Xsb1} for the definition of the $X^{s, b}$-space
adapted to the Fourier-Lebesgue space in time.
The main observation is that the Fourier-Lebesgue
space $\F L^{b, q}(\R)$ captures the (local-in-time) regularity of
a Brownian motion, provided that 
\begin{align}
(b-1) q< -1.
\label{bilin0y}
\end{align} 

\noi
See \cite{BO}.
Note that by taking $q > 2$, we can choose $b > \frac 12$.
This observation allows us to capture the regularity of the stochastic convolution
$\Psi$ in \eqref{sto1} with $b > \frac 12$, 
thus enabling us to establish a key bilinear estimate;
 see Lemma \ref{LEM:sto1}\,(i)
and Proposition \ref{PROP:bilin1}.
See \cite{Oh4, FOW, OQS}, 
where a similar idea was used. 
We present a proof of Theorem \ref{THM:1} in Section \ref{SEC:GWP}.

\begin{remark}\label{REM:neg}\rm
(i) While 
local well-posedness of \eqref{KDV1} under \eqref{HS4} 
(without assuming~\eqref{HS3})
may be extended to negative Sobolev spaces, 
 we do not pursue this issue in this paper, 
since our main goal in this paper is to prove unconditional uniqueness
of \eqref{KDV1} in $L^2(\R)$ (Theorem~\ref{THM:2}).
We will address this issue (together with global well-posedness
of \eqref{KDV1} in negative Sobolev spaces)
in a forthcoming work.

\smallskip

\noi
(ii) 
In \cite{CLO}, Cheung, Li, and the second author
introduced a new approach for proving global well-posedness of 
stochastic dispersive PDEs
by 
adapting the $I$-method (= method of almost conservation laws)
to the stochastic setting.
It would be of interest to apply the approach in \cite{CLO}
to prove global well-posedness of SKdV \eqref{KDV1}
in negative Sobolev spaces.

\end{remark}


Next, we turn our attention to the uniqueness issue for \eqref{KDV1}.
We first recall 
from~\cite{KATO} 
that  a Cauchy problem is  said to be {\it unconditionally (locally) well-posed}
in $H^s(\R)$
if for every initial condition $u_0 \in H^s(\R)$,
there exist $T >0$ and a unique solution $u \in C([0, T ];H^s(\R))$
with  $u|_{t = 0} = u_0$.
We refer to such uniqueness in the entire class $C([0, T ];H^s(\R))$, 
without intersecting with any auxiliary function space, as {\it unconditional uniqueness}.
Unconditional uniqueness is a concept of uniqueness which does not depend
on how solutions are constructed
and thus is of fundamental importance.
As such, 
unconditional uniqueness for (deterministic) dispersive PDEs 
 has been studied extensively
by various methods;
see, for example, 
 \cite{Furioli, KO, GKO, 
 KOPV, CGKO, Chen, HS, OW2, Kishi21, KT1, KT2, BOW}.
In the stochastic setting, however, 
there is no unconditional uniqueness result
(for low regularity initial data).

Due to the use of the Fourier restriction norm method, 
the uniqueness
of 
 a solution constructed
in Theorem~\ref{THM:1}
holds (locally in time) in 
\[X^{0, b}_q(T) \cap C([0, T]; L^2(\R))\]

\noi
for some $b > \frac 12$ and $q > 2$, satisfying \eqref{bilin0y}
(see \eqref{lwp0}), 
where $X^{s, b}_q(T)$ is as in \eqref{Xsb2}.
Namely, 
 the uniqueness of a solution constructed in Theorem \ref{THM:1} holds 
 {\it conditionally}.
In the next theorem, we establish 
a sharp unconditional uniqueness
result for \eqref{KDV1}
in $L^2(\R)$.

\begin{theorem}\label{THM:2}
Suppose that \eqref{HS1} holds.
Then, pathwise unconditional uniqueness
in $L^2(\R)$ 
holds for 
the stochastic KdV equation
\eqref{KDV1} on $\R$.
More precisely, 
given two global-in-time solutions $u_1, u_2 \in C(\R; L^2(\R))$
to \eqref{mild1}, 
there exists 
$\Si \subset \Om$ with $\PP (\Si) = 1$ such that 
for each $\o \in \Si$, 
we have $u_1(\o) \equiv u_2(\o)$, 
where $u_j(\o)$, $j = 1, 2$,  denotes 
the  solution to \eqref{mild1}
with a realization of the noise for $\o \in \Si$.

\end{theorem}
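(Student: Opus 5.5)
We propose to reduce Theorem~\ref{THM:2} to a deterministic unconditional uniqueness statement, following Zhou's argument, which is based on the Miura transform / smoothing of the nonlinearity for $L^2$ solutions of KdV on $\R$. We fix $\o$ in a full-measure set $\Si$ on which the stochastic convolution $\Psi(\o)$ has the needed regularity. By Lemma~\ref{LEM:sto1} we expect $\Psi(\o) \in C(\R; L^2(\R))$, together with a local smoothing / maximal-function-type property:
\begin{align*}
\dx \Psi \in L^\infty_x L^2_t([-R,R] \times [0,T]),
\qquad
\Psi \in L^4_x L^\infty_t ,
\end{align*}
holding locally in time on a countable family of time intervals, and hence almost surely.

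The plan is to set $v_j = u_j - \Psi$, $j = 1, 2$. Each $v_j$ lies in $C(\R; L^2(\R))$ and solves, in the distributional sense, the deterministic forced equation
\begin{align*}
\dt v + \dx^3 v + \dx (v+\Psi)^2 = 0 .
\end{align*}
Since $u_j \in C_t L^2_x$, the nonlinearity satisfies $u_j^2 \in C_t L^1_x$. Hence the mild formulation and the distributional formulation agree, and we can work with $w = u_1 - u_2 = v_1 - v_2$, which satisfies
\begin{align*}
\dt w + \dx^3 w + \dx\bigl((u_1+u_2) w\bigr) = 0, \qquad w(0) = 0 .
\end{align*}
Following Zhou, we pass to the antiderivative $W = \dx^{-1} w$ (or an appropriate weighted $L^2$/$H^{-1}$-type quantity) and run an energy estimate for $\|W(t)\|_{L^2}$. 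This gives
\begin{align*}
\tfrac{d}{dt}\|W\|_{L^2}^2 \les \Bigl|\int \dx(u_1+u_2)\, W^2 \,dx\Bigr| ,
\end{align*}
after integrating by parts. Mere $L^2$ regularity of $u_j$ does not bound this. Zhou's key idea is to exploit Kato-type local smoothing, namely that $\dx v \in L^2_{t,\text{loc}}$ with weights, via the weight $\eta(x)$ trick. The smoothing for $v_j$ is obtained from the Duhamel formula: the nonlinearity $\dx(u^2)$ with $u^2 \in L^\infty_t L^1_x$ gains one derivative through the Kenig--Ponce--Vega smoothing estimate $\|\dx \int S(t-t') F\|_{L^\infty_x L^2_t} \les \|F\|_{L^1_xL^2_t}$. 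For the $\Psi$ part, smoothing is supplied by the almost sure bounds above.

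The concrete steps are as follows:
\begin{enumerate}
\item Establish the almost sure space-time bounds for $\Psi$ (maximal function and local smoothing norms) from Lemma~\ref{LEM:sto1} and the corresponding deterministic linear estimates. This uses only \eqref{HS1}, through Gaussian moment bounds and Kolmogorov/Fubini.
\item For arbitrary $C_tL^2_x$ solutions, show a priori that $u_j$ enjoys the same smoothing/maximal bounds on short time intervals, using a bootstrap on $\|u_j\|_{L^4_xL^\infty_t}$ (or Zhou's norms) over short intervals whose length depends on $u_j$ via equicontinuity of the $L^2$ norm.
\item Perform Zhou's uniqueness estimate for $w$ in these norms on a short interval $[0,\delta]$, closing a contraction-type inequality $\|w\|_{Y_\delta} \le \tfrac12 \|w\|_{Y_\delta}$, which forces $w = 0$.
\item Iterate in time, using continuity and compactness of $u_j([0,T])$ in $L^2$, so that $\delta$ is uniform.
\end{enumerate}

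We expect step~2 to be the main obstacle. A general $C_tL^2_x$ solution has no a priori auxiliary regularity, so we must prove that it automatically belongs to the smoothing/maximal spaces, with constants that are small on short intervals. The difficulty is that $L^2$ is critical for the $L^1_x$-type nonlinear estimate, which produces no smallness from $T$ alone. The first thing to try is Zhou's device of splitting $u_j(t)$ into a fixed smooth frequency-truncated part plus a part with small $L^2$ norm, which is uniform in $t \in [0,\delta]$ by compactness of $\{u_j(t)\}$ in $L^2$. The smooth part is then treated perturbatively with time factors, and the small part gives the bootstrap smallness. In addition, $\Psi$ must be shown to split compatibly; this is possible since $\Psi(\o)$ is itself continuous in $L^2$ and its smoothing norms tend to zero on short intervals.
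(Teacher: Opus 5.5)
Your proposal has a genuine gap, located where you expect it (Step~2), and it cannot be closed with the tools you list. First, Zhou's argument is not a Miura-transform or local-smoothing argument. It is a Fourier restriction norm argument built on a modified bilinear estimate.

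The smoothing route fails at $L^2$ for concrete reasons:
\begin{itemize}
\item The inhomogeneous smoothing estimate you invoke needs the forcing $u^2\in L^1_xL^2_t$, i.e.\ $u\in L^2_xL^4_t$. Minkowski gives $\|F\|_{L^2_tL^1_x}\le\|F\|_{L^1_xL^2_t}$, not the reverse, so $u^2\in L^\infty_tL^1_x$ yields nothing in $L^1_xL^2_t$.
\item Splitting $u_j(t)$ into a frequency-truncated part plus a part small in $C_tL^2_x$ does not help. Smallness in $C_tL^2_x$ gives neither smallness nor even finiteness of the rough part in $L^2_xL^4_t$ or $L^4_xL^\infty_t$. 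A bootstrap in those norms presupposes their finiteness, which an unconditional statement forbids you to assume.
\item The maximal estimate $\|S(t)f\|_{L^4_xL^\infty_t}\les\|D_x^{1/4}f\|_{L^2}$ is sharp by scaling. So $\Psi\in L^4_xL^\infty_t$ is not a consequence of Lemma~\ref{LEM:sto1} under \eqref{HS1}.
\item The energy identity for $W=\dx^{-1}w$ produces $\int\dx(u_1+u_2)W^2\,dx$. Even granting $\dx u_j\in L^\infty_xL^2_t$, this requires $W\in L^2_xL^4_t$, which is not an energy quantity.
\end{itemize}
These are the reasons the smoothing/maximal-function approach to KdV does not reach $L^2$, and why the $L^2$ theory needed $X^{s,b}$ spaces.

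The missing idea is that an arbitrary $C_tL^2_x$ solution is automatically controlled, with smallness, in weak Bourgain norms, and that these weak norms suffice in a modified bilinear estimate.
\begin{itemize}
\item The paper writes $u_j=\Phi+v_j$ with $\Phi=S(t)u_0+\Psi$, so that $v_j(0)=0$.
\item It reads off $(\dt+\dx^3)(\eta_{_T}v_j)$ from the equation and uses $L^1_x\subset H^{-\frac12-\eps}_x$ to get $\|v_j\|_{X^{-\frac32-\eps,1}(T)}\les\dl$. The cutoff term $\dt\eta_{_T}\cdot v_j$ is controlled via the Duhamel formula and $v_j(0)=0$.
\item Interpolating with the trivial $X^{0,0}(T)$ bound gives smallness of $v_j$ in $X^{-\frac32\ta-\eps,\ta}(T)$, hence in $\mathcal X^\eps(T)$. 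This needs no bootstrap and no a priori auxiliary regularity.
\item The noise is handled not by smoothing norms but by placing $\Phi$ in $\mathcal Y^\eps(T)$, built from $X^{0,b}_q$ and $X^{0,\frac12-\eps}$ with $b>\frac12$, $q>2$, $bq'<1$. This uses Lemma~\ref{LEM:sto1}\,(i), with smallness as $T\to0$ from \eqref{Xsb4}.
\item The difference $w=v_1-v_2$ is measured in $X^{-\frac34-2\eps,b}(T)$. Zhou's estimate (Lemma~\ref{LEM:bilin2}) for $\dx(w(v_1+v_2))$ and the new estimate (Lemma~\ref{LEM:bilin3}) for $\dx(w\Phi)$ together close $\|w\|\le C_0C_1\dl\|w\|$.
\end{itemize}
Your plan has no substitute for this a priori weak-norm control of general $C_tL^2_x$ solutions, so Steps~3 and~4 have nothing to run on.
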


We note that a global-in-time solution $u \in C(\R; L^2(\R))$ to \eqref{mild1}
means that there exists a set $\Si_u \subset \Om$
with $\PP(\Si_u) = 1$
such that, for each $\o \in \Si_u$,  $u(\o)$
satisfies \eqref{mild1} 
in $C(\R; L^2(\R))$
with a realization of the noise (and hence 
of the stochastic convolution $\Psi = \Psi(\o)$).
Namely, the set $\Si_u$ of full probability depends on the solution $u$.
In discussing unconditional uniqueness, we do not know how a given solution $u$
has been constructed and thus there is no information 
about the associated set $\Si_u$ of full probability.
In particular, the intersection of 
$\Si_u$ over all possible solutions $u$ to \eqref{mild1} may not have full probability
(and may even have probability $0$).
This is the reason that, in Theorem \ref{THM:2}, 
we first start with two given solutions $u_j$, $j = 1, 2$, 
and then determine a set $\Si = \Si_{u_1} \cap \Si _{u_2}$ of 
full probability, where we compare the solutions $u_j$, $j = 1, 2$; see the discussion at the beginning of Subsection \ref{SUBSEC:4.3}.

Theorem \ref{THM:2}
establishes the first (pathwise) unconditional uniqueness
result for
stochastic dispersive PDEs in a low regularity setting.
We  point out that Theorem \ref{THM:2} is optimal
since the $L^2$-regularity is required to make
sense of the quadratic nonlinearity 
$\dx (u^2)(t)$
in \eqref{KDV1} (and~\eqref{mild1})
as a spatial distribution for given $t \ge 0$
(without intersecting with an auxiliary function space).

In \cite{Zhou}, Zhou
 proved unconditional uniqueness
 of the deterministic KdV \eqref{KDV2}
in $L^2(\R)$
by establishing a modified version of  the bilinear estimate \eqref{bilin0x}.
Our proof is based on adapting Zhou's argument
to the stochastic setting.
We present a proof of Theorem \ref{THM:2}
in Section~\ref{SEC:UU}.

\begin{remark}\rm
(i) By adapting the normal form approach in \cite{BIT}
to the stochastic setting, 
Sosoe, Wang, and the first two authors 
\cite{GOSW} recently established
unconditional uniqueness 
in $L^2(\T)$
of SKdV \eqref{KDV1} on the circle. 

\smallskip

\noi
(ii)
In a recent work
\cite{CLOZ}, 
Chapouto, Li, Zhao, and  the second author 
studied shallow-water convergence of the (scaled) intermediate
long wave equation to KdV.
As one of key ingredients in the proof, they
implemented an infinite iteration of normal form reductions
for 
the (deterministic) KdV \eqref{KDV2} on $\R$.
We expect that a combination of the argument 
in \cite{CLOZ, GOSW} would provide
an alternative proof of Theorem \ref{THM:2}.
We, however,   point out  that 
such a normal form approach would be much lengthier
and thus we 
do not pursue this issue here.

\end{remark}

\section{Notations,  function spaces, and tools from stochastic analysis}

\subsection{Basic notations}
Let $A\les B$ denote an estimate of the form $A\leq CB$ for some constant $C>0$. We write $A\sim B$ if $A\les B$ and $B\les A$, while $A\ll B$ denotes $A\leq c B$ for some small constant $c> 0$. 

Let $\eta \in C_c(\R; [0, 1])$ be 
a smooth bump function (in time)
with $\supp \eta \subset [-1, 2]$
such that $\eta \equiv 1$ on $[0, 1]$.
Given $T > 0$, let 
\begin{align}
\eta_{_T}(t) = \eta(T^{-1}t).
\label{eta1}
\end{align}

\noi
Then,  when $b \ge 0$, we have
\begin{align}
\| \eta_{_T}\|_{\F L^{b, q}}
\les T^{- b + \frac{1}{q'}} 
\|\eta \|_{\F L^{b, q}}
\label{FL1a}
\end{align}

\noi
for $0 \le T \le 1$, where $q'$ is the H\"older conjugate of $q$.

In dealing with function spaces
on space-time functions, 
we  use short-hand notations such as
$C_T H^s_x  = C\big([0, T]; H^s(\R))$, 
 when there is no ambiguity.

\subsection{Function spaces}\label{SUBSEC:2.2}

In order to prove well-posedness in $L^2(\R)$ of SKdV \eqref{KDV1}
under the assumption \eqref{HS1} (namely, 
without imposing a further regularity on $\phi$ in \eqref{KDV1}), 
we introduce
a version of 
the $X^{s, b}$-space
adapted to the Fourier-Lebesgue space $\FL^{b, p}(\R)$ in time;
 see \cite{GH, FOW}.
 See also
\cite{Grun1, Grun2}.
Given $s, b\in\R$ and $1 \le  q \le  \infty$,
define the $X^{s,b}_q$-space via the norm:
\begin{align}
\| u\|_{X^{s,b}_q}=\| \jb{\xi}^s\jb{\tau -\xi^3 }^b\ft u (\tau, \xi)\|_{L^2_\xi L^q_{\tau}(\R\times \R)}, 
\label{Xsb1}
\end{align}

\noi
where $\jb{\,\cdot\,} = (1 + |\cdot|^2)^\frac 12$.
Given $T > 0$, 
we define the local-in-time version of the $X^{s, b}_q$-space
by setting
 \begin{align}
\| u\|_{X^{s,b}_{q}(T) }=\inf\big\{\| v\|_{X^{s,b}_{q}}: v|_{[0,T]}=u\big\}, 
\label{Xsb2}
\end{align}
     
\noi
where the infimum is taken over all extensions $v$ of $u$
from $[0, T]$ to $\R$.

\smallskip
\begin{itemize}
\item[(i)]
\underline{$b q'> 1$:} 
In this case, we have 
\begin{align}
 X^{s, b}_{q}(T)\subset C([0, T]; H^s(\R)).
 \label{Xsb1a}
\end{align}

\noi
See \cite[(2.3)]{FOW}.

\smallskip
\item[(ii)]
\underline{$bq' < 1$:}
In this case, 
the embedding \eqref{Xsb1} does not hold.
For $b \ge 0$ and $1 < q < \infty$ with $bq' < 1$, 
it follows from Lemma \ref{LEM:X2} with \eqref{Xsb2} that 
\begin{align}
\| u\|_{X^{s,b}_{q}(T) } 
\sim \| \ind_{[0, T]}\cdot u\|_{X^{s,b}_{q}}.
\label{Xsb3}
\end{align}

\noi
Then, given $0 \le b < b_0$ 
and  $1 < q < \infty$
with $b_0 q' < 1$, 
a slight modification of the proof of \cite[Lemma~3.2]{CO}
with~\eqref{Xsb3}
yields
\begin{align}
 \| u\|_{X^{s, b}_{q}(T)} 
 \les  T^{\frac 1{q'} ( 1- \frac{b}b_0)}\| u\|_{X^{s, b_0}_{q}(T)} 
\label{Xsb4}
\end{align}

\noi
for $0 \le T\le 1$, which in particular implies
that, given $T_0 > 0$, we have 
\begin{align}
\lim_{T \to 0} \| u\|_{X^{s, b}_{q}(T)} 
= 0
\label{Xsb5}
\end{align}

\noi
for any function $u \in 
 X^{s, b_0}_{q}(T_0)$.
 Note that 
we do not need to assume 
 $u|_{t = 0} = 0$ in~\eqref{Xsb5}.
Compare this with the case $bq' > 1$, 
where \eqref{Xsb5} fails unless  $u|_{t = 0} = 0$.
 

\end{itemize}


\smallskip

We 
recall the standard linear estimates.
See 
\cite[Lemma 2.2]{FOW}
for the proof.
The bound~\eqref{lin1} follows from \eqref{FL1a}.

\begin{lemma}\label{LEM:lin1}
\textup{(i) (homogeneous linear estimate).}
Given $1\leq q\leq \infty$  and $s,b\in \R$, we have 
\begin{equation*}
\|S(t)f\|_{X^{s,b}_{q}(T)} \les \|f\|_{H^s}
\end{equation*}

\noi
for any $0 < T \leq 1$.
Moreover, for $0 \le b < \frac 1{q'}$, we have 
\begin{align}
\|S(t)f\|_{X^{s,b}_{q}(T)} \les T^{- b + \frac1 {q'}}\|f\|_{H^s}
\label{lin1}
\end{align}

\noi
for any $0 < T \leq 1$.

\medskip

\noi
\textup{(ii) (nonhomogeneous linear estimate).}
Let $s\in \R$,  $1<q<\infty$ and $-\frac{1}{q}<b'\leq 0\leq b \leq 1+b'.$ 
Then, we have 
\begin{equation*}
\bigg\| \int_0^tS(t-t')F(t')dt'\bigg\|_{X^{s,b}_{q }(T)}\les T^{1+b'-b}\|F\|_{X^{s,b'}_{q}(T)}
\end{equation*}

\noi
for any $0< T \leq 1$.

\end{lemma}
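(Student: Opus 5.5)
The statement to prove is Lemma~\ref{LEM:lin1}. Both parts reduce, via the definition \eqref{Xsb2}, to one-dimensional estimates in time in the Fourier-Lebesgue space $\F L^{b,q}$. The reduction goes through the interaction representation: write $u(t) = S(t) v(t)$, so that $\ft u(\tau,\xi) = \ft v(\tau - \xi^3, \xi)$. This gives
\[
\|u\|_{X^{s,b}_q} = \big\| \jb{\xi}^s \| \ft v(\cdot,\xi)\|_{\F L^{b,q}_t}\big\|_{L^2_\xi}.
\]
Everything is then done for fixed frequency $\xi$ and integrated in $L^2_\xi$ at the end.

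\textbf{Part (i).} Choose the extension $\eta_{_T}(t) S(t) f$ of $S(t)f$ from $[0,T]$ to $\R$. Its norm is
\[
\|\eta_{_T}\|_{\F L^{b,q}} \, \|f\|_{H^s}.
\]
\begin{itemize}
\item For the first bound we do not need decay in $T$. Since $T\le 1$, we may use a fixed extension, for instance $\eta(t)S(t)f$, whose restriction to $[0,T]\subset[0,1]$ agrees with $S(t)f$. This gives a constant independent of $T$, for any $b\in\R$.
\item For \eqref{lin1} with $b\ge 0$, we instead use \eqref{FL1a}, which gives the factor $T^{-b+1/q'}$. This factor is a gain, i.e.\ a positive power of $T$, exactly when $b<1/q'$.
\end{itemize}

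\textbf{Part (ii).} For fixed $\xi$, set $g(t) = \widehat{S(-t)F(t)}^{x}(\xi)$. The Duhamel term then becomes $S(t)\int_0^t g(t')\,dt'$, so the claim reduces to the scalar estimate
\[
\Big\|\eta_{_T}(t)\int_0^t g(t')\,dt'\Big\|_{\F L^{b,q}} \les T^{1+b'-b}\,\|g\|_{\F L^{b',q}},
\]
taken over an arbitrary extension of $g$ beyond $[0,T]$ and followed by the infimum. I would prove this in three steps.

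\begin{enumerate}
\item \emph{Restrict to $[0,T]$.} Since $-1/q<b'\le 0$, we have $|b'|q'<1$ when $q\ge 2$, and this is the dual version of case (ii) of Subsection~\ref{SUBSEC:2.2}. Using Lemma~\ref{LEM:X2} together with \eqref{Xsb3}, one may replace $g$ by $\ind_{[0,T]}g$ at bounded cost. This is the step where the hypothesis $b'>-1/q$ is needed.
\item \emph{Split on the Fourier side.} Write
\[
\int_0^t g = \int \frac{e^{it\tau}-1}{i\tau}\,\ft g(\tau)\,d\tau,
\]
and split into $|\tau|\le T^{-1}$ and $|\tau|>T^{-1}$.
\begin{itemize}
\item On the low part, Taylor expand $e^{it\tau}-1=\sum_{k\ge1}(it\tau)^k/k!$. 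Each term has the form $\eta_{_T}(t)\,t^k\tau^{k-1}$, and its $\F L^{b,q}$ norm is controlled by scaling, as in \eqref{FL1a}, by $T^{k-b+1/q'}$. The remaining factor $\int_{|\tau|\le T^{-1}}|\tau|^{k-1}|\ft g|\,d\tau$ is bounded by Hölder by $T^{-(k-1)-1/q'-b'}\|g\|_{\F L^{b',q}}$. Multiplying, the powers of $T$ combine to exactly $T^{1+b'-b}$, and the factorials make the sum over $k$ converge.
\item On the high part, the term with $e^{it\tau}$ is estimated by the Fourier-Lebesgue analogue of the convolution bound
\[
\|\eta_{_T}h\|_{\F L^{b,q}}\les \|h\|_{\F L^{b,q}},
\]
after using $\jb{\tau}^{b}\le \jb{\tau}^{1+b'}|\tau|^{-1}T^{-(1+b'-b)}$ for $|\tau|>T^{-1}$. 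The constant term $\int_{|\tau|>T^{-1}}\ft g/\tau\,d\tau$ times $\eta_{_T}$ is handled as in part~(i), again with Hölder.
\end{itemize}
\item \emph{Reassemble.} Integrate the fixed-$\xi$ bound in $L^2_\xi$ against $\jb{\xi}^{2s}$ to obtain the $X^{s,b}_q(T)$ estimate.
\end{enumerate}

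\textbf{Main obstacle.} I expect the main difficulty to be the weighted convolution bound for multiplication by $\eta_{_T}$ in $\F L^{b,q}$ with a constant uniform in $T\le 1$, combined with Step~1. The plan is to use $\jb{\tau}^b\les\jb{\tau-\sigma}^b+\jb{\sigma}^b$ and Young's inequality, with $\|\ft{\eta_{_T}}\|_{L^1}\les 1$. The borderline regimes $b=1+b'$ and $b'$ near $-1/q$ are where care is needed. Since the paper cites \cite[Lemma 2.2]{FOW}, any step that fails here could alternatively be imported from there.
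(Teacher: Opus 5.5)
Your part (i) matches the paper: the fixed extension $\eta(t)S(t)f$ gives the first bound, and $\eta_{_T}(t)S(t)f$ together with \eqref{FL1a} gives \eqref{lin1}. For part (ii) the paper gives no argument of its own and just cites \cite[Lemma 2.2]{FOW}. Your reduction to a scalar estimate at fixed $\xi$, followed by the Ginibre--Tsutsumi--Velo style split $|\tau|\le T^{-1}$ / $|\tau|>T^{-1}$ \cite{GTV}, is the standard route. The Taylor-expanded low part and the constant high-frequency term do close.

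\textbf{The gap.} The step you call the main obstacle fails as stated. The bound $\|\eta_{_T}h\|_{\F L^{b,q}}\les\|h\|_{\F L^{b,q}}$ is false uniformly in $T$ once $b>\frac1{q'}$. That is exactly the regime in which the lemma is used (e.g.\ $b_1>\frac1{q'}$ in \eqref{lwp1}). Counterexample: take $h$ Schwartz with $h\equiv 1$ on $[-1,2]$. Then $\eta_{_T}h=\eta_{_T}$, and $\|\eta_{_T}\|_{\F L^{b,q}}\ges T^{\frac1{q'}-b}\to\infty$.

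Accordingly, your Young's inequality argument with $\|\ft{\eta_{_T}}\|_{L^1}\les 1$ only controls the $\jb{\sigma}^b$ piece. The $\jb{\tau-\sigma}^b$ piece costs $\|\jb{\cdot}^b\ft{\eta_{_T}}\|_{L^1}\sim T^{-b}$.

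\textbf{The fix.} What is missing is the frequency localization $\supp \ft h\subset\{|\sigma|>T^{-1}\}$, which you have but do not use. There are two ways to exploit it.
\begin{enumerate}
\item On that set, $\jb{\tau}\le\jb{\sigma}(1+T|\tau-\sigma|)$. Young's inequality then gives the constant $\|(1+|\cdot|)^b\ft\eta\|_{L^1}$, which is independent of $T$.
\item Alternatively, bound the $\jb{\tau-\sigma}^b$ piece by Young with exponents $(q,1)$:
\[
\|\eta_{_T}\|_{\F L^{b,q}}\,\|\ft h\|_{L^1}\les T^{\frac1{q'}-b}\cdot T^{1+b'-\frac1{q'}}\|g\|_{\F L^{b',q}}.
\]
This uses the same H\"older computation as for your constant term.
\end{enumerate}

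\textbf{Smaller points.}
\begin{enumerate}
\item Step 1 is unnecessary. Your scalar bound holds for an arbitrary extension $g$, since nothing uses $\supp g\subset[0,T]$. Taking the infimum over extensions of $F$ already gives the $X^{s,b}_q(T)$ estimate.
\item The justification of Step 1 is also off. Boundedness of $\ind_{[0,T]}$ on $\F L^{b',q}$ for $b'<0$ follows from Lemma~\ref{LEM:X2} by duality with $\F L^{-b',q'}$. That requires $-b'<\frac1q$, which is exactly the hypothesis, for every $1<q<\infty$. It is not the condition $|b'|q'<1$ with $q\ge 2$.
\item The hypothesis $b'>-\frac1q$ is really used in the H\"older bound $\||\tau|^{-1-b'}\ind_{|\tau|>T^{-1}}\|_{L^{q'}}\sim T^{1+b'-\frac1{q'}}$, which needs $(1+b')q'>1$. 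It is not needed in Step 1.
\item Sign slip in the low part: on $|\tau|\le T^{-1}$ one has $\jb{\tau}^{-b'}\les T^{b'}$. So H\"older gives $T^{-(k-1)-\frac1{q'}+b'}$, and this is what makes the powers combine to $T^{1+b'-b}$.
\item Sign slip in the high part: on $|\tau|>T^{-1}$ the weight inequality should read $\jb{\tau}^b|\tau|^{-1}\les T^{1+b'-b}\jb{\tau}^{b'}$, using $b\le 1+b'$.
\end{enumerate}
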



\subsection{Tools from stochastic analysis}
\label{SUBSEC:sto}



We first provide a precise meaning to the expression in \eqref{sto1}.
Fix a (real-valued) orthonormal basis $\{e_n \}_{n \in \N}$ of $L^2(\R)$.
Define a sequence 
$\{ B_n \}_{n \in \N}$  by 
$B_n(t) = \jb{\xi, \ind_{I_t} \cdot e_n}_{ t, x}$, 
where $I_t = [0, t]$ 
for $t \ge 0$ and $I_t = [t, 0]$ for $t < 0$.
Here, $\jb{\cdot, \cdot}_{ t, x}$ denotes 
the duality pairing on $\R\times \R$.
Note that   we have
 \[\E[ B^2_n(t)] = \E\Big[
 \jb{\xi, \ind_{I_t} \cdot e_n}_{t, x}\jb{\xi, \ind_{I_t} \cdot e_n}_{t, x}
 \Big] = \|\ind_{I_t} \cdot e_n\|_{L^2_{t, x}}^2 = |t|\]

\noi
for any 
$n \in \N$, and 
$\E[ B_m(t)B_n(t)] = 0$ 
if $m \ne n$.
As a result, 
we see that $\{ B_n \}_{n \in \N}$ is a family of mutually independent 
(two-sided, real-valued)
Brownian motions.
Then, by  defining
a cylindrical Wiener process $W$ on $L^2(\R)$ by 
\begin{align*}
 W(t)  =  \sum_{n\in \NB}
B_n (t) e_n, 
\end{align*}

\noi
we can express the  stochastic convolution $\Psi$ in~\eqref{sto1}
as
\begin{align*}
\Psi(t) =   \int_0^t S(t - t') \phi dW (t')
=   \sum_{n \in \NB} 
\int_0^t S(t - t') \phi(e_n)  dB_n (t'), 
\end{align*}

\noi
where each integral in the summation is interpreted
as a Wiener integral.

The following lemma summarizes  regularity properties
of the stochastic convolution $\Psi$.

\begin{lemma}\label{LEM:sto1}
Given $s \in \R$, 
let $\phi \in \HS(L^2(\R); H^s(\R))$
and  $\Psi$ be as in \eqref{sto1}.
Fix $T >0$.

\smallskip

\noi
{\rm (i)}
Let $b \in \R$ and $1 \le q < \infty$  such that 
\begin{align}
(b-1) q< -1.
\label{BM1}
\end{align}

\noi
Then, 
given  any finite $p  \ge 1$, there exists $C = C (T,p,  s, b, q) >0$ such that
\[
\E \bigg[  \| \Psi  \|_{X^{s, b}_q(T)}^p \bigg] 
\le C \| \phi \|_{\HS(L^2; H^s)}^p.
\]

\smallskip

\noi
{\rm (ii)}
We have $\Psi \in C ( [0,T];H^s (\R))$ almost surely.
Moreover, given any finite $p  \ge 1$, there exists $C = C (T, p, s) >0$ such that
\begin{align*}
\E \bigg[ \sup_{0 \le t \le T} \| \Psi (t) \|_{H^s}^p \bigg] 
\le C \| \phi \|_{\HS(L^2; H^s)}^p.
\end{align*}

\end{lemma}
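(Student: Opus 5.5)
We prove both parts by computing the Fourier transform of the stochastic convolution directly on the interaction representation side. Write $\phi(e_n)$ and set $\ft{\phi e_n}(\xi)$ for its spatial Fourier transform. Then
\[
\ft{\Psi}(t,\xi) = \sum_{n} e^{it\xi^3}\int_0^t e^{-it'\xi^3}\ft{\phi e_n}(\xi)\, dB_n(t').
\]
Hence $S(-t)\Psi(t)$ has spatial Fourier coefficient $\sum_n \ft{\phi e_n}(\xi) \int_0^t e^{-it'\xi^3}dB_n(t')$. For each fixed $\xi$, this is a complex Gaussian process in $t$, modulated by a bounded phase.

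\textbf{Part (i).} Take the extension $v(t) = \eta_{_T}(t)\Psi(t)$; this is admissible for \eqref{Xsb2}, and $\Psi$ is defined for $t\in\R$ by the two-sided Brownian motions. Since $\|v\|_{X^{s,b}_q} = \|\jb{\xi}^s \|\jb{\tau}^b \mathcal{F}_t[\eta_{_T} \ft{S(-t)\Psi}(\cdot,\xi)](\tau)\|_{L^q_\tau}\|_{L^2_\xi}$, we may assume $p\ge \max(2,q)$ by H\"older and then use Minkowski twice:
\[
\E\|v\|^p_{X^{s,b}_q} \le \Big\| \jb{\xi}^s \big\| \jb{\tau}^b \big(\E|\mathcal{F}_t[\cdots](\tau,\xi)|^p\big)^{1/p}\big\|_{L^q_\tau}\Big\|_{L^2_\xi}^p .
\]
For fixed $(\tau,\xi)$, the quantity $\mathcal{F}_t[\cdots](\tau,\xi)$ is a Wiener integral, hence Gaussian. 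Its $p$-th moment is therefore controlled by its variance, namely
\[
\sum_n |\ft{\phi e_n}(\xi)|^2 \int |\mathcal{F}_t[\eta_{_T}(t)\ind_{[0,t]}(t')e^{-it'\xi^3}](\tau)|^2\,dt'.
\]
A stochastic Fubini computation evaluates the time Fourier transform explicitly. After the change of variable $\tau\mapsto \tau+\xi^3$, which matches the weight $\jb{\tau-\xi^3}^b$, the modulation disappears. The variance then reduces to $\sum_n|\ft{\phi e_n}(\xi)|^2$ times a $\xi$-independent function $g(\tau)$, and $g$ is the variance of the Fourier transform of $\eta_{_T}\cdot B$ for a single Brownian motion.

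The key estimate is $|g(\tau)|^{1/2}\les_T \jb{\tau}^{-1}$. To see it, write $\int_0^t dB$ through integration by parts, $\mathcal{F}[\eta_{_T}(t)\ind_{t>t'}](\tau) \approx e^{-it'\tau}/(i\tau)$ plus a rapidly decaying term, and then integrate in $t'$ over a compact set. Consequently $\|\jb{\tau}^b g^{1/2}\|_{L^q_\tau}<\infty$ exactly when $(b-1)q<-1$, which is \eqref{BM1}. We then integrate in $\xi$, using $\sum_n\|\jb{\xi}^s\ft{\phi e_n}\|_{L^2}^2 = \|\phi\|^2_{\HS(L^2;H^s)}$, and the bound follows.

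This is the main obstacle. One must check carefully that the $\tau$-decay of the Fourier transform of a cutoff Brownian integral is exactly $\jb{\tau}^{-1}$ in the $L^2(\Omega)$ sense, uniformly in $\xi$ after modulation. The rest is Minkowski and Gaussian hypercontractivity.

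\textbf{Part (ii).} Since $S(t)$ is unitary on $H^s$, it suffices to treat $Z(t)=S(-t)\Psi(t)=\int_0^t S(-t')\phi\,dW(t')$, which is an $H^s$-valued martingale with quadratic variation $t\|\phi\|_{\HS(L^2;H^s)}^2$. The Burkholder--Davis--Gundy inequality (or Doob's inequality combined with Gaussian moment equivalence) gives $\E\sup_{t\le T}\|Z(t)\|^p_{H^s}\les_p T^{p/2}\|\phi\|^p_{\HS}$. Continuity of $Z$ in $H^s$ follows from Kolmogorov's criterion applied to $\E\|Z(t)-Z(t')\|^p_{H^s}\les |t-t'|^{p/2}$, or alternatively from approximation by finite sums, which are continuous, with uniform convergence supplied by the maximal bound. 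Continuity of $\Psi(t)=S(t)Z(t)$ then follows from strong continuity of $S(t)$ together with its unitarity.
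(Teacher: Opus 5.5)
Your proposal is correct and follows essentially the standard argument the paper defers to. Part (i) is the computation in \cite{DDT2, FOW}: extension $\eta_{_T}\Psi$, Minkowski, Gaussian moment equivalence, and the $\jb{\tau}^{-1}$ decay of the time Fourier transform of a cutoff Brownian integral. Your integration by parts does give that decay, uniformly for $t'$ in the compact support; the remainder is $O(\jb{\tau}^{-2})$ rather than rapidly decaying, and no change of variables is needed since $|e^{-it'\xi^3}|=1$. Part (ii) matches the paper's Kolmogorov route, with the one mild shortcut that you use unitarity of $S(t)$ on $H^s$ to reduce to the martingale $S(-t)\Psi(t)$ and apply BDG instead of the factorization method; this is legitimate precisely because $S(t)$ is a group of isometries.
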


We recall that the condition \eqref{BM1}
captures the (local-in-time) Fourier-Lebesgue regularity of a Brownian motion;
see
\cite{BO}.
We also note that \eqref{BM1} is equivalent to 
$b q' <  1$, and thus the embedding \eqref{Xsb1a} does not hold.

\begin{proof}
Part (i)  follows from a standard argument; see, for example, 
\cite[Proposition 2.1]{DDT2}
and 
\cite[Lemma 3.1]{FOW}.
See also 
\cite{DDT1, Oh4}.
Part  (ii) follows from a standard application of the factorization method
or a Kolmogorov continuity criterion argument; see, 
for example,  \cite[Chapter 5]{DZ}.
\end{proof}

Next, we 
recall the following a priori bound on 
the $L^2$-norm of a solution to \eqref{KDV1}.

\begin{lemma} \label{LEM:sto2}
Given $u_0 \in L^2(\R)$
and $\phi \in \HS(L^2(\R); L^2(\R))$, 
 let $u$ be a  solution to~\eqref{KDV1}
 with $u|_{t = 0} = u_0$.
 Then, given a target time $T_0>0$, 
there exists $C_1 = C_1 \big(\|u_0\|_{L^2}, T_0, \| \phi \|_{\HS (L^2; L^2)}\big)>0$ such that for any stopping time $T$ with $0<T< \min (T^{\ast}, T_0)$ almost surely, we have
\begin{align}
\E \bigg[ \sup_{0\le t \le T} \|u(t)\|_{L^2_x}^2 \bigg] \le C_1,
\label{app1}
\end{align}

\noi
where 
 $T^{\ast} = T^{\ast}_{\omega} (u_0, \phi)$ is the forward maximal time of existence.

\end{lemma}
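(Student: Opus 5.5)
Write $v = u - \Psi$, so that $v$ solves the random KdV-type equation
\[
\dt v + \dx^3 v + \dx\big((v+\Psi)^2\big) = 0, \qquad v|_{t=0} = u_0 .
\]
The plan is to apply It\^o's formula to $\|u(t)\|_{L^2}^2$ in the standard way.

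Formally,
\begin{align*}
d\|u\|_{L^2}^2
= -2\jb{u, \dx^3 u + \dx(u^2)}\,dt + 2\jb{u, \phi\, dW} + \|\phi\|_{\HS(L^2;L^2)}^2\, dt .
\end{align*}
The drift term $\jb{u,\dx^3 u}$ vanishes by skew-adjointness. The term $\jb{u,\dx(u^2)} = -\tfrac23\int \dx(u^3)\,dx$ vanishes as well. Hence, for $0 \le t \le T$,
\[
\|u(t)\|_{L^2}^2 = \|u_0\|_{L^2}^2 + t\|\phi\|_{\HS}^2 + 2\int_0^t \jb{u(t'), \phi\, dW(t')} .
\]
Taking the supremum over $t\in[0,T]$ and then expectations, I will bound the martingale term by the Burkholder--Davis--Gundy inequality:
\[
\E\sup_{t\le T}\Big|\int_0^t \jb{u,\phi\, dW}\Big|
\les \E\Big[\Big(\int_0^T \|\phi^* u\|_{L^2}^2\, dt\Big)^{1/2}\Big]
\le \|\phi\|_{\HS}\,\E\Big[ T_0^{1/2}\sup_{t\le T}\|u(t)\|_{L^2}\Big].
\]
By Young's inequality, the right-hand side is at most $\tfrac12\,\E\sup_{t \le T}\|u(t)\|_{L^2}^2 + C\, T_0\|\phi\|_{\HS}^2$. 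Absorbing the first piece into the left-hand side gives
\[
\E\Big[\sup_{0\le t\le T}\|u(t)\|_{L^2}^2\Big] \le 2\|u_0\|_{L^2}^2 + C\,T_0\|\phi\|_{\HS}^2 .
\]
For the absorption to be legitimate, the left-hand side must be known to be finite. I will secure this by localizing with the stopping times $T_R = \inf\{t : \|u(t)\|_{L^2} \ge R\}\wedge T$, running the argument up to $T_R$, and letting $R\to\infty$ using monotone convergence.

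The main obstacle is justifying It\^o's formula, since $u$ is only $L^2$-valued and neither $\dx^3 u$ nor $\dx(u^2)$ makes sense in $L^2$. I would regularize in one of two ways.
\begin{itemize}
\item Smooth the data and noise, replacing $u_0$ by $P_{\le N}u_0$ and $\phi$ by $P_{\le N}\phi$. The solutions $u_N$ are then smooth and It\^o's formula holds classically. Stability of the local well-posedness construction in $X^{0,b}_q(T)$, via the contraction argument and Lemma~\ref{LEM:sto1}, gives $u_N\to u$ in $C([0,T];L^2)$ on $[0,T]$ with $T<T^*$. Passing to the limit in the bound via Fatou's lemma yields \eqref{app1}.
\item Alternatively, apply It\^o's formula to $\|P_{\le N} u\|_{L^2}^2$. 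The cubic commutator term $\jb{P_{\le N}u, \dx P_{\le N}(u^2)}$ then does not vanish, and one shows it tends to zero.
\end{itemize}
I would take the first route, since it only uses continuity of the solution map, whereas the second route needs commutator control at $L^2$ regularity. Note that uniformity in the stopping time is automatic, because the constant depends only on $\|u_0\|_{L^2}$, $T_0$, and $\|\phi\|_{\HS}$.
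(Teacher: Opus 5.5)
Your proposal is correct and follows the route the paper indicates: apply It\^o's formula to $\|u(t)\|_{L^2}^2$ (the dispersive and nonlinear drift terms vanish), use the Burkholder--Davis--Gundy inequality with absorption after stopping-time localization, and justify It\^o's formula by regularizing the data and noise as in de Bouard--Debussche--Tsutsumi, details the paper omits as standard. When writing it out, note that $u_N \to u$ holds only almost surely along a subsequence, since Lemma~\ref{LEM:sto1} gives convergence in $L^p(\Omega)$, and that existence of $u_N$ on all of $[0,T]$ must itself come from your localization combined with the blowup alternative for $u_N$.
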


The a priori bound \eqref{app1}  follows
from Ito's lemma (applied to $\|u(t)\|_{L^2_x}^2$) and the Burkholder-Davis-Gundy inequality.
In order to justify an application of Ito's lemma, 
one needs to go through a certain approximation argument.
See \cite[Subsection 3.2]{DDT1}.
See also   \cite[Proposition~3.2]{DD1}.
Since the required argument is standard, 
we omit details.

\section{Well-posedness of SKdV in $L^2(\R)$}
\label{SEC:GWP}

In this section, we present a proof of Theorem \ref{THM:1}.
Our main task is to prove local well-posedness
of \eqref{KDV1} {\it without} assuming \eqref{HS3}.
We achieve this goal by working with 
the $X^{s, b}_q$-space in \eqref{Xsb1}
with $b > \frac 12$ and $q>2$, satisfying 
\eqref{BM1}.

We state a key bilinear estimate in 
the $X^{s, b}_q$-spaces.

\begin{proposition}\label{PROP:bilin1}
Let $s \ge 0$, 
$b_0 \le - \frac 14$, 
and $b > \frac 12 $ and $2 <  q \le \infty$ satisfy
\begin{align}
\frac 14 + \frac1{2q'}
< b < \frac{1}{q'}.
\label{bilin0}
\end{align}

\noi
Then, we have 
\begin{align}
\| \dx (u_1u_2)\|_{X^{s, b_0}_q(T)}\les 
\prod_{j = 1}^2 \| u_j\|_{X^{s, b}_q(T)}, 
\label{bilin1}
\end{align}

\noi
uniformly in $T > 0$.
\end{proposition}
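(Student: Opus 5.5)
First, I reduce to the global-in-time estimate
\[
\| \dx (u_1u_2)\|_{X^{s, b_0}_q}\les \prod_{j = 1}^2 \| u_j\|_{X^{s, b}_q}.
\]
Given this, I take arbitrary extensions $v_j$ of $u_j$ and note that $\dx(v_1v_2)$ extends $\dx(u_1u_2)$. Taking the infimum over the extensions then yields \eqref{bilin1} uniformly in $T$.

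Since $\jb{\xi}^s \les \jb{\xi_1}^s\jb{\xi_2}^s$ for $s \ge 0$, it suffices to treat $s = 0$. We may also take $b_0 = -\frac14$, because the norm is monotone in $b_0$. Write $\sigma = \tau - \xi^3$ and $\sigma_j = \tau_j - \xi_j^3$, and set $f_j = \jb{\sigma_j}^b |\ft u_j|$, so that $\|f_j\|_{L^2_\xi L^q_\tau} = \|u_j\|_{X^{0,b}_q}$. The algebraic resonance relation $\sigma - \sigma_1 - \sigma_2 = -3\xi\xi_1\xi_2$ gives
\[
\max(\jb{\sigma},\jb{\sigma_1},\jb{\sigma_2}) \ges \jb{\xi\xi_1\xi_2}.
\]
The estimate then becomes a weighted trilinear bound: I must control
\[
\bigg\| \int \frac{|\xi|\, f_1(\tau_1,\xi_1) f_2(\tau-\tau_1,\xi-\xi_1)}{\jb{\sigma}^{\frac14}\jb{\sigma_1}^b\jb{\sigma_2}^b}\, d\tau_1 d\xi_1 \bigg\|_{L^2_\xi L^q_\tau}.
\]

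My main tool is a bound on the $\tau$-integrals in mixed norm. For fixed $\xi$ and $\xi_1$, I apply Young/H\"older in $\tau$. With $f_j(\cdot,\xi_j)\in L^q_\tau$, I write $\jb{\sigma_j}^{-b}f_j$ and use H\"older to place $\jb{\sigma_j}^{-b}$ in $L^{r}_\tau$ with $\frac1r = \frac1{q'} - \frac1q \cdot 0$. Concretely, $\|\jb{\sigma_j}^{-b} f_j\|_{L^1_\tau} \le \|\jb{\cdot}^{-b}\|_{L^{q'}}\|f_j\|_{L^q}$ is finite iff $bq'>1$. This fails in our regime $b<\frac1{q'}$, so the gain must instead come from convolving two such weights.

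The key elementary lemma I will use is
\[
\int \jb{\tau_1 - a}^{-bq'}\jb{\tau-\tau_1 - c}^{-bq'}\,d\tau_1 \les \jb{\tau - a - c}^{1-2bq'},
\]
which is valid since $2bq'>1>bq'$. This replaces the $L^2$-based fact $\int \jb{\cdot}^{-2b}\jb{\cdot}^{-2b} \les \jb{\cdot}^{1-4b}$. After H\"older in $\tau_1$ (pairing $f_1f_2$ in $L^{q/2}$, or more carefully using $L^q\times L^q\to L^{q/2}$ Young), the estimate reduces to bounding
\[
\sup_\xi \int |\xi|^{2}\jb{\sigma}^{-\frac12}\jb{\tau - \xi_1^3-(\xi-\xi_1)^3}^{\frac{2}{q'}-4b}\,d\xi_1
\]
in suitable norms, with the remaining $\tau$ handled by the outer $L^q_\tau$. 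The change of variables $\mu = \xi_1^3 + (\xi-\xi_1)^3$, with $d\mu = 3|\xi||2\xi_1-\xi|\,d\xi_1$, converts the $\xi_1$-integral into a $\mu$-integral. The exponent condition $\frac14+\frac1{2q'}<b$, i.e.\ $4b - \frac2{q'} > 1$, is exactly what makes the $\mu$-integral of $\jb{\tau-\mu}^{\frac2{q'}-4b}$ converge, as in the Kenig--Ponce--Vega argument for $s=0$.

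I then split into cases according to which modulation is maximal. When $\jb{\sigma}$ is maximal, I use $\jb{\sigma}^{-\frac14}\le\jb{\xi\xi_1\xi_2}^{-\frac14}$ together with the Jacobian. When $\jb{\sigma_1}$ (or symmetrically $\jb{\sigma_2}$) is maximal, I dualize: I move the $L^q_\tau$ norm onto the output via duality with $L^2_\xi L^{q'}_\tau$ and swap the roles, applying the same lemma with weights $\jb{\sigma}^{-\frac14 q}$ and $\jb{\sigma_2}^{-bq'}$.

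I expect the main obstacle to be this dual case in the ``high $\times$ low $\to$ high'' regime, where $|\xi_2|\les 1$ and $|\xi|\sim|\xi_1|$. There the Jacobian $|\xi||2\xi_1-\xi|\sim|\xi|^2$ is favorable, but the output weight $\jb{\sigma}^{-1/4}$ sits in the dual space with exponent $q'<2$ mismatched against $q$. This is precisely where \cite{DDT1} needed $\dot H^{-3/8}$, and it is where the $q>2$ freedom (allowing $b>\frac12$) must be exploited. In that case I would first try pairing $L^q$ with $L^{q'}$ directly: I bound $\jb{\sigma}^{-1/4}$ in $L^{r}_\tau$ with $\frac14 r>1$ and spend the smallness via the lower bound $\jb{\sigma_1}\ges|\xi|^2|\xi_2|$, accepting possibly a dyadic decomposition in $|\xi_2|$ summed using the strict inequality in \eqref{bilin0}.
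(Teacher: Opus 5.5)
Your proposal has a genuine gap: it does not close. You have the right two ingredients, namely the convolution bound $\int\jb{\tau_1-a}^{-bq'}\jb{\tau-\tau_1-c}^{-bq'}d\tau_1\les\jb{\tau-a-c}^{1-2bq'}$ (valid since $bq'<1<2bq'$) and the Kenig--Ponce--Vega change of variables. But instead of finishing that computation, you switch to a case split by maximal modulation. The case you yourself flag as the main obstacle, the dualized high $\times$ low $\to$ high case, is left at ``I would first try\ldots''. Bounding $\jb{\sigma}^{-1/4}$ in some $L^r_\tau$ and summing dyadically in $|\xi_2|$ is not an argument. Moreover, dualizing against $L^2_\xi L^{q'}_\tau$ with $q'<2$ runs into exactly the exponent mismatch you point out. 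The reduction step is also misstated. Pairing $f_1f_2$ in $L^{q/2}_{\tau_1}$ would not produce the exponent $\frac2{q'}-4b$ that you then write down. Also, there is no convolution Young inequality $L^q\times L^q\to L^{q/2}$ when $q>2$.

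The missing point is that no case analysis is needed. Apply H\"older in $(\tau_1,\xi_1)$ jointly, putting the kernel in $L^2_{\xi_1}L^{q'}_{\tau_1}$ and $f_1(\tau_1,\xi_1)f_2(\tau-\tau_1,\xi-\xi_1)$ in $L^2_{\xi_1}L^{q}_{\tau_1}$. Then apply Minkowski's inequality, which is where $q\ge 2$ is used. This bounds the left-hand side by $\sup_{\tau,\xi}M_{\tau,\xi}\prod_{j}\|f_j\|_{L^2_\xi L^q_\tau}$, where
\[
M_{\tau,\xi}=\bigg\|\frac{|\xi|}{\jb{\tau-\xi^3}^{1/4}\jb{\tau_1-\xi_1^3}^{b}\jb{\tau_2-\xi_2^3}^{b}}\bigg\|_{L^2_{\xi_1}L^{q'}_{\tau_1}}.
\]
Your convolution lemma gives $M_{\tau,\xi}^2\les|\xi|^2\jb{\tau-\xi^3}^{-1/2}\int\jb{\tau-\xi^3+3\xi\xi_1\xi_2}^{\frac2{q'}-4b}d\xi_1$. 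Now substitute $\mu=\tau-\xi^3+3\xi\xi_1\xi_2$, whose Jacobian is $\sqrt{3|\xi|\,|4\tau-\xi^3-4\mu|}$. Using $4b-\frac2{q'}>1$, this yields
\[
M_{\tau,\xi}^2\les\frac{|\xi|^{3/2}}{\jb{\tau-\xi^3}^{1/2}\jb{4\tau-\xi^3}^{1/2}}\les 1,
\]
because $4\tau-\xi^3=4(\tau-\xi^3)+3\xi^3$ forces $\max\big(\jb{\tau-\xi^3},\jb{4\tau-\xi^3}\big)\ges|\xi|^3$. This single uniform bound covers every frequency interaction, including high $\times$ low $\to$ high. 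The obstruction in \cite{DDT1} was that $b<\frac12$ makes the $\mu$-integrand exponent $4b-1$ fall below $1$. The lower bound $b>\frac14+\frac1{2q'}$ in the hypothesis, i.e.\ $4b-\frac2{q'}>1$, is exactly what removes it. This is the paper's proof.
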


By assuming
Proposition \ref{PROP:bilin1}, 
we first present a proof of Theorem \ref{THM:1}
on global well-posedness of \eqref{KDV1} in $H^s(\R)$, $s \ge 0$
under the assumption \eqref{HS4}.

\begin{proof}[Proof of Theorem \ref{THM:1}]

We only consider the case $s = 0$.

We first discuss local well-posedness.
Given small $\eps > 0$, 
let 
\begin{align}
b = \frac 12 + \eps \qquad \text{and}\qquad  
q = \frac{2}{1-3\eps} > 2
\label{lwp0}
\end{align}

\noi
such that 
\eqref{bilin0}
(and thus 
\eqref{BM1}) holds.
With this choice of $q > 2$, choose $b_1 \in \R$ such that 
\begin{align}
\frac 1 {q'}< b_1 \le \frac 34-\eps,  
\label{lwp1}
\end{align}

\noi
and set $b_0 = b_1 - 1 + \eps$.
By applying a contraction argument with Lemma \ref{LEM:lin1},
Lemma \ref{LEM:sto1}\,(i), and 
Proposition~\ref{PROP:bilin1} (see also \eqref{lwp2}), 
we see that there exists almost surely a unique solution 
$u \in X^{0, b}_q(T)$
to~\eqref{KDV1}
on a time interval $[0, T]$
for some 
almost surely positive time  $T = T_\o$.

From \eqref{bilin0}, we haver  $b q' < 1$ and thus the embedding \eqref{Xsb1a}
does not hold.
Hence, we need to show that each term on the right-hand side of \eqref{mild1}
belongs to 
$C([0, T]; H^s(\R))$
in order to conclude that the solution 
$u \in X^{0, b}_q(T)$ constructed above
indeed belongs to 
$C([0, T]; H^s(\R))$.
By standard analysis  and Lemma \ref{LEM:sto1}\,(ii), 
we see that the first and third terms 
on the right-hand side of \eqref{mild1}
belong to 
$C([0, T]; H^s(\R))$, almost surely.

Let $b_1$ be as in \eqref{lwp1}.
Then,  from Lemma \ref{LEM:lin1} and Proposition \ref{PROP:bilin1}, 
we have
\begin{align}
\bigg\| \int_0^t S(t-t') \dx (u^2)(t') dt'\bigg\|_{X^{0, b_1}_q(T)}
\les 
\| u\|_{X^{s, b}_q(T)}^2, 
\label{lwp2}
\end{align}

\noi
which is almost surely finite.
Then, it follows from the embedding \eqref{Xsb1a} with \eqref{lwp1}
that 
 the second  term
on the right-hand side of \eqref{mild1}
belongs to 
$C([0, T]; H^s(\R))$, almost surely.
Therefore, we conclude that the solution 
$u\in X^{0, b}_q(T)$
to~\eqref{KDV1}
belongs to 
$C([0, T]; H^s(\R))$, almost surely.
This proves local well-posedness of \eqref{KDV1}.

Next, we briefly discuss how to obtain global well-posedness.
From the local well-posedness argument
and Lemma \ref{LEM:sto1}\,(ii), 
we have  the following blowup alternative;
let $T^{\ast} = T^{\ast}_{\omega} (u_0, \phi)$ be the forward maximal time of existence.
Then, either
\begin{align}
T^* = \infty
\qquad \text{or} \qquad
\lim_{t \nearrow T^*} \| u(t) \|_{L^2_x} = \infty.
\label{lwp3}
\end{align}

\noi
By applying
Lemma \ref{LEM:sto2}, we see that the second scenario in \eqref{lwp3}
does not occur, almost surely.
Namely, $T^* = \infty$ almost surely, 
thus yielding global well-posedness of \eqref{KDV1}.
\end{proof}

We now present a proof of 
Proposition \ref{PROP:bilin1}.

\begin{proof}[Proof of Proposition \ref{PROP:bilin1}]
It follows from \eqref{Xsb2} that 
 \eqref{bilin1} follows once we prove the corresponding estimate without a time restriction:
\begin{align}
\| \dx (u_1u_2)\|_{X^{s, b_0}_q}\les 
\prod_{j = 1}^2 \| u_j\|_{X^{s, b}_q}.
\label{bilin2}
\end{align}

We prove \eqref{bilin2} by following the proof of 
\cite[Theorem 1.1]{KPV96} for $s = 0$, 
in particular, the proof of 
\cite[Lemma 2.4]{KPV96}.
In view of the triangle inequality: $\jb{\xi}^s \les \jb{\xi_1}^s + \jb{\xi_2}^s$
for $\xi = \xi_1 + \xi_2$ when $s \ge 0$, 
it suffices to prove \eqref{bilin2} for $s = 0$.
Moreover, 
\eqref{bilin2} for $s = 0$
follows once we prove the following estimate:
\begin{align}
\bigg\|
\intt_{\substack{\tau = \tau_1 + \tau_2\\\xi = \xi_1 + \xi_2}}
\frac{|\xi|} {\s^{-b_0}}
\frac{ f_1(\tau_1, \xi_1)f_2(\tau_2, \xi_2) d\tau_1 d\xi_1}
{\s_1^{b}
\s_2^{b}}
\bigg\|_{L^2_\xi L^q_\tau}
\les \prod_{j = 1}^2 \|f_j\|_{L^2_\xi L^q_\tau}, 
\label{bilin3}
\end{align}

\noi
where $\s$, $\s_1$, and $\s_2$ are given by 
\begin{align}
\s = \jb{\tau -\xi^3}
\qquad \text{and}\qquad 
\s_j = \jb{\tau_j - \xi_j^3}, \quad j = 1, 2.
\label{bilin4}
\end{align}

\noi
For simplicity of notation, 
we  drop
the conditions 
$\tau = \tau_1 + \tau_2$
and $\xi = \xi_1 + \xi_2$
in the following 
but it is understood that these conditions hold.

By
H\"older's inequality followed by 
 Minkowski's integral inequality, 
 we have 
\begin{align}
\begin{split}
\text{LHS of }\eqref{bilin3}
& \le \sup_{\tau, \xi \in \R}  M_{\tau, \xi} \, 
\prod_{j = 1}^2 \|f_j\|_{L^2_\xi L^q_\tau},
\end{split}
\label{bilin4a}
\end{align}

\noi
where $M_{\tau, \xi}$ is given by 
\begin{align*}
 M_{\tau, \xi} = \bigg\|
\frac{ |\xi|} {\s^{-b_0}\s_1^{b}
\s_2^{b}}
\bigg\|_{L^2_{\xi_1}L^{q'}_{\tau_1}}.
\end{align*}

From 
\cite[Lemma 4.2]{GTV}
with \eqref{bilin0}, 
we have 
\begin{align}
M_{\tau, \xi}^2 
\les 
\frac{ |\xi|^2} {\s^{-2b_0}}
\int
\frac{d\xi_1}{\jb{\tau - \xi^3 + 3 \xi \xi_1 \xi_2}^{4b - \frac 2{q'}}}.
\label{bilin6}
\end{align}

\noi
We now proceed with a change of variables as in 
 the proof of 
\cite[Lemma 2.4]{KPV96}.
Namely, by letting
\begin{align}
\mu =  \mu_{\tau, \xi}(\xi_1) = \tau - \xi^3 + 3 \xi \xi_1 (\xi - \xi_1), 
\label{cv0}
\end{align}

\noi
we have 
\begin{align}
 \dd_{\xi_1} \mu = 3\xi(\xi - 2\xi_1).
\label{cv1}
\end{align}

\noi
On the other hand, by expressing $\xi_1$ in terms of $\mu$, we have\footnote{Note that \eqref{cv0}
guarantees that what's inside the square root in \eqref{cv2} 
is non-negative.}
\begin{align}
\xi_1 = \frac {\xi}{2} \pm \frac 12 \sqrt{\frac{4\tau - \xi^3- 4\mu }{3\xi}}.
\label{cv2}
\end{align}

\noi
Thus, from \eqref{cv1} and \eqref{cv2}, we have
\begin{align}
|\dd_{\xi_1}\mu | = \sqrt{ (3\xi) (4\tau - \xi^3- 4\mu)}
\label{cv3}
\end{align}

\noi
Hence, from \eqref{bilin6}, \eqref{cv0}
and \eqref{cv3}
with \cite[(2.9) in Lemma 2.3]{KPV96}, we have 
\begin{align}
\begin{split}
M_{\tau, \xi}^2 
& \les 
\frac{ |\xi|^\frac 32} {\s^{-2b_0}}
\int
\frac{d\mu}{\jb{\mu}^{4b - \frac 2{q'}}
 \sqrt{ |4\tau - \xi^3- 4\mu|}}\\
&  \les 
\frac{ |\xi|^\frac 32} {\jb{\tau - \xi^3}^{-2b_0}
\jb{4\tau - \xi^3}^\frac{1}{2}}
\les 1, 
\end{split}
\label{cv4}
\end{align}

\noi
uniformly in $\tau, \xi \in \R$, 
provided that 
\begin{align*}
4b - \frac 2{q'} > 1\qquad \text{and}\qquad 
b_0 \le - \frac 14.
\end{align*}

\noi
Hence, 
the desired bound \eqref{bilin3}
follows from \eqref{bilin4a} and \eqref{cv4}.
\end{proof}


\vspace{1cm}

\section{Unconditional uniqueness in $L^2(\R)$}
\label{SEC:UU}

In this section, we prove unconditional uniqueness of SKdV \eqref{KDV1}
in $L^2(\R)$ (Theorem~\ref{THM:2})
by adapting Zhou's argument \cite{Zhou} to the current stochastic setting.

\subsection{Preliminary analysis}

Let $u \in C([0, 1]; L^2(\R))$ 
be a solution to KdV \eqref{KDV2}
with $\|u\|_{C_1 L^2_x} \le \dl$
for some small $\dl > 0$.
Then, by applying scaling and interpolating a trivial bound on the $X^{0, 0}([0, 1])$-norm
and a bound on the $X^{- \frac 32-\eps, 1}([0, 1])$-norm
(which was obtained by 
using the equation; see \eqref{ZH4} below), 
Zhou \cite{Zhou} showed
that 
the following bound holds:
\begin{align*}
\|u\|_{X^{- \frac 32\ta-\eps, \ta}([0, 1])} \les \dl
\end{align*}

\noi
for any $0 \le \ta \le 1$ and $\eps > 0$;
see \cite[(1.16)]{Zhou}.

In the current stochastic setting, 
we can not directly use the equation 
to estimate 
the $X^{- \frac 32-\eps, 1}(T)$-norm
of a solution  $u$ to SKdV \eqref{KDV1},
since the right-hand side of \eqref{KDV1}
does not make sense pointwise in time.
In order to overcome this issue, we proceed with the following first order 
expansion  \cite{McK, BO96, DPD}:
\begin{align}
u = \Phi + v.
\label{ZH2}
\end{align}

\noi
Here, $\Phi = \Phi(u_0)$ is defined by
\begin{align}
\Phi(t) = S(t) u_0 + \Psi(t), 
\label{ZH3}
\end{align}

\noi
where $\Psi$ denotes the stochastic convolution in \eqref{sto1}.
Then, $v = u - \Phi$ satisfies
\begin{align}
\begin{cases}
\dt v + \dx^3 v + \dx \big((v+\Phi) ^2\big) = 0\\
v|_{t = 0} = 0.
\end{cases}
\label{KDV3}
\end{align}

\begin{remark}\rm 
We included the linear solution $S(t) u_0$ 
in the definition \eqref{ZH3} of $\Phi$
such that $v|_{t = 0} = 0$,  
which allows us to avoid
a scaling argument; see \eqref{ZH9} below.
We note that applying the KdV scaling to the unknown $u$
and the white noise scaling to the space-time white noise $\xi$ in \eqref{KDV1} 
preserves SKdV \eqref{KDV1} only in law, 
which is not necessarily suitable for a pathwise argument.
We refer interested readers to 
\cite[Subsection 3.3]{CLO}
for a scaling in the stochastic setting.
\end{remark}

Let $v\in C(\R; L^2(\R)) $ be a solution to \eqref{KDV3}.
Namely, there exists a set $\Si \subset \Om$ with $\PP(\Si) = 1$
such that, 
for  $\o \in \Si$, 
there exists a global solution $v = v(\o)
\in C(\R; L^2(\R))$ to \eqref{KDV3}
with a realization of the noise (and hence of $\Phi = \Phi(\o)$ in \eqref{KDV3})
for $\o \in \Si$.
In the following, we fix $\o \in \Si$
and suppress the dependence on $\o \in \Si$.

Given $\eps > 0$, it follows from 
 Plancherel's identity,~\eqref{KDV3}, 
and 
Cauchy-Schwarz' inequality (on the Fourier side and
then on the physical side) that 
\begin{align}
\begin{split}
\| v \|_{X^{-\frac 32 - \eps, 1}}
& \sim 
 \| v\|_{L^2_t H^{-\frac 32 - \eps}_x}
+   \| (\dt - \dx^3) v\|_{L^2_t H^{-\frac 32 - \eps}_x}\\
& \les
 \| v\|_{L^2_t L^2_x}
+ \|(v+\Phi)^2\|_{L^2_t H^{-\frac 12 - \eps}_x}\\
& \les
 \| v\|_{L^2_t L^2_x}
+ \|v\|_{L^2_t L^2_x}^2
+ \|\Phi\|_{L^2_t L^2_x}^2.
\end{split}
\label{ZH4}
\end{align}

\noi
See \cite[(1.10)-(1.14)]{Zhou}.
If we were to follow Zhou's argument, 
we would first apply 
scaling such that, 
when restricted to the time interval $[0, 1]$, 
the contribution from 
the right-hand side of \eqref{ZH4}
is small.
In the following, however, 
we proceed slightly differently from \cite{Zhou}
so that we do not rely on  a scaling.
We point out that by working on a short time interval $[0, T]$
for some small $T > 0$, 
we would get a negative power of $T$ 
in \eqref{ZH4},
and thus some care is needed.
See \eqref{ZH7} and \eqref{ZH9} below.

Fix small $\dl > 0$ and let $0 < T \le \frac 12 $ (to be chosen later).
Let $\eta_{_T}$ be as in \eqref{eta1}.
Note that  $\supp (\eta_{_T}) \subset [-1, 1]$.
From 
\eqref{Xsb2}, 
\eqref{ZH2}, \eqref{ZH3}, and 
Lemma \ref{LEM:sto1}\,(i), and 
Lemma \ref{LEM:sto2} (with $T = 1$), we have 
\begin{align}
\begin{split}
\| v \|_{X^{0, 0}(T)} 
& \le 
\| \eta_{_T} v \|_{X^{0, 0}(T)} = \|\eta_{_T} v\|_{L^2_{T, x}}\\
& \le
T^\frac{1}{2} \|v\|_{C([-1, 1]; L^2_{x})}\\
& \le T^\frac 12 C\big(\|u_0\|_{L^2}, \|\phi\|_{\HS(L^2; L^2)}\big)\\
& \le \dl, 
\end{split}
\label{ZH5}
\end{align}

\noi
provided that 
$T = T\big(\o, \dl, \|u_0\|_{L^2}, \|\phi\|_{\HS(L^2; L^2)}\big) > 0$ is sufficient small.
Proceeding as in \eqref{ZH4}, 
we have
\begin{align}
\begin{split}
\| v \|_{X^{-\frac 32 - \eps, 1}(T)}
& \les 
 \|\eta_{_T} v\|_{L^2_t H^{-\frac 32 - \eps}_x}
+   \| (\dt - \dx^3) (\eta_{_T}v)\|_{L^2_t H^{-\frac 32 - \eps}_x}\\
& \les
 \| \eta_{_T} v\|_{L^2_t L^2_x}
+ \|\eta_{_T}(v+\Phi)^2\|_{L^2_t H^{-\frac 12 - \eps}_x}
+ \|\dt \eta_{_T}\cdot v\|_{L^2_t H^{- \frac{3}{2}-\eps}_x}
\\
& \les
 T^\frac 12 \| v\|_{C([-1, 1]; L^2_x)}
+ T^\frac 12\Big( \|v\|_{C([-1, 1]; L^2_x)}^2
+  \|\Phi\|_{C([-1, 1]; L^2_x)}^2\Big)\\
& \quad + T^\frac 12\|\dt \eta_{_T}\cdot v\|_{C([-T, 2T]; H^{-\frac{3}{2}-\eps}_x)}.
\end{split}
\label{ZH7}
\end{align}

\noi
By taking 
$T = T\big(\o, \dl, \|u_0\|_{L^2}, \|\phi\|_{\HS(L^2; L^2)}\big) > 0$ 
sufficiently small 
 with Lemma \ref{LEM:sto1}\,(ii)
and Lemma \ref{LEM:sto2}, 
we have 
\begin{align}
 T^\frac 12 \| v\|_{C([-1, 1]; L^2_x)}
+ T^\frac 12  \Big(\|v\|_{C([-1, 1]; L^2_x)}^2
+  \|\Phi\|_{C([-1, 1]; L^2_x)}^2\Big)
\les \dl.
\label{ZH8}
\end{align}

\noi
From \eqref{eta1} and the Duhamel formulation of \eqref{KDV3}
for $v$ (here, we use the fact that $v|_{t = 0} = 0$), we have 
\begin{align}
\begin{split}
& T^\frac 12\|\dt \eta_{_T}\cdot v\|_{C([-T, 2T]; H^{-\frac{3}{2}-\eps}_x)}
\les T^{\frac 12}
\|\eta'\|_{L^\infty}
 \|(v+\Phi)^2\|_{C([-T, 2T]; H^{-\frac 12 - \eps}_x)}\\
& \quad \les T^\frac 12 
\|\eta'\|_{L^\infty} \Big(\|v\|_{C([-1, 1]; L^2_x)}^2
+  \|\Phi\|_{C([-1, 1]; L^2_x)}^2\Big)\\
& \quad \les \dl, 
\end{split}
\label{ZH9}
\end{align}

\noi
where the last step follows
from 
taking 
$T = T\big(\o, \dl, \|u_0\|_{L^2}, \|\phi\|_{\HS(L^2; L^2)}\big) > 0$ 
sufficiently small. 
Hence, from \eqref{ZH7}, 
\eqref{ZH8}, and \eqref{ZH9}, 
we obtain 
\begin{align}
\| v \|_{X^{-\frac 32 - \eps, 1}(T)} \les \dl.
\label{ZH10}
\end{align}

\noi
Therefore, by interpolating \eqref{ZH5} and \eqref{ZH10}, 
we conclude that 
\begin{align}
\|v\|_{X^{- \frac 32\ta-\eps, \ta}(T)} \les \dl
\label{ZH11}
\end{align}

\noi
for any $0 \le \ta \le 1$ and $\eps > 0$, 
provided that 
$T = T\big(\o, \dl, \|u_0\|_{L^2}, \|\phi\|_{\HS(L^2; L^2)}\big) > 0$ 
sufficiently small.

Given small $\eps > 0$, 
let  $b= \frac 12 + \eps$ and $q = \frac 2{1-3\eps}$  be as in \eqref{lwp0}.
Then, define the $\cX^\eps$- and $\cY^\eps$-norms by 
\begin{align}
\begin{split}
\|u\|_{\cX^\eps}
& =  \|u \|_{X^{-\frac 34 - 2\eps, b}}
+ \|u \|_{X^{-\frac 38 - \frac 12 \eps, \frac 14}}
+  \| u\|_{X^{ - \frac 34 - \frac {13}2\eps, \frac 12 + 4\eps}}, \\
\|u \|_{\cY^\eps} 
& = 
 \| u\|_{X^{0, b}_q}
+  \| u\|_{X^{0, \frac 12 - \eps}}. 
\end{split}
\label{XX1}
\end{align}

\noi
Note that the $X^{s, b}$-norms appearing in the definition of the $\cX^\eps$-norm correspond
to the $X^{- \frac 32\ta-\eps, \ta}$-norm in~\eqref{ZH11} (where we replace $\eps$ by $\frac 12 \eps$)
with $\ta = b$, $\ta = \frac14$, and $\ta = \frac 12 + 4\eps$, respectively.
As in~\eqref{Xsb2}, given $T > 0$, 
we define the local-in-time version of the $\cX^\eps$-norm 
by setting
 \begin{align}
\| u\|_{\cX^\eps(T) }=\inf\big\{\| v\|_{\cX^\eps}: v|_{[0,T]}=u\big\}, 
\label{XX2}
\end{align}
     
\noi
where the infimum is taken over all extensions $v$ of $u$
from $[0, T]$ to $\R$.
We  define the $\cY^\eps(T)$-norm in an analogous manner.
In view of \eqref{Xsb3}, we have 
\begin{align}
\| u\|_{\cY^\eps(T) }
\sim \| \ind_{[0, T]} \cdot u \|_{\cY^\eps}.
\label{XX3}
\end{align}

\subsection{Bilinear estimate}

We first recall the following bilinear estimate
from Zhou's work; see
\cite[Theorems 1.1 and 2.1]{Zhou}.

\begin{lemma}\label{LEM:bilin2}
Given small $\eps > 0$, let $b = \frac 12 + \eps$ as in \eqref{lwp0}.
Then, we have 
\begin{align*}
& \| \dx (u_1u_2)\|_{X^{-\frac 34 - 2\eps, b-1}(T)}
  \les 
\|u_1 \|_{X^{-\frac 34 - 2\eps, b}(T)}
\|u_2 \|_{\cX^\eps(T)}
\end{align*}

\noi
for any $T > 0$,  
where 
$\cX^\eps(T)$ is as in \eqref{XX1} and \eqref{XX2}.

\end{lemma}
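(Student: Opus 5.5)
First I reduce to a global-in-time estimate. By \eqref{Xsb2} and \eqref{XX2}, take extensions $\tilde u_1$, $\tilde u_2$ of $u_1$, $u_2$ that almost attain the infimum in $X^{-\frac34-2\eps,b}$ and $\cX^\eps$. Since $\dx(\tilde u_1\tilde u_2)$ is an extension of $\dx(u_1u_2)$, it suffices to prove
\[
\| \dx (u_1u_2)\|_{X^{-\frac 34 - 2\eps, b-1}} \les \|u_1 \|_{X^{-\frac 34 - 2\eps, b}} \|u_2 \|_{\cX^\eps}
\]
on $\R\times\R$. By duality and Plancherel, this is a weighted trilinear bound on the Fourier side. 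Let $f_1=\jb{\xi}^{-\frac34-2\eps}\s^{b}|\ft u_1|$, let $g\in L^2_{\tau,\xi}$ be the dual function, and let $\xi=\xi_1+\xi_2$, $\tau=\tau_1+\tau_2$ with $\s,\s_j$ as in \eqref{bilin4}. Three weights are available for $u_2$, one for each component of the $\cX^\eps$-norm: $\jb{\xi_2}^{-\frac34-2\eps}\s_2^{b}$, $\jb{\xi_2}^{-\frac38-\frac12\eps}\s_2^{1/4}$, and $\jb{\xi_2}^{-\frac34-\frac{13}2\eps}\s_2^{\frac12+4\eps}$. For each frequency/modulation region I will choose the most convenient of these. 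The algebraic identity $\tau-\xi^3-(\tau_1-\xi_1^3)-(\tau_2-\xi_2^3)=-3\xi\xi_1\xi_2$ gives
\[
\max(\s,\s_1,\s_2)\ges \jb{\xi\xi_1\xi_2}.
\]

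The case analysis follows Kenig--Ponce--Vega and Zhou. When all frequencies are $\les 1$, the bound is trivial by Cauchy--Schwarz. In the remaining cases I split according to which of $\s,\s_1,\s_2$ is maximal.

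\emph{$\s$ maximal.} The factor $\s^{b-1}$ absorbs about $|\xi\xi_1\xi_2|^{-\frac12+\eps}$. With the first weight on $u_2$, the standard $X^{s,b}$ bilinear calculus applies: the KPV integral lemmas, estimating $\sup_{\tau,\xi}$ of an $L^2_{\tau_1,\xi_1}$-norm after the change of variables $\mu$ of \eqref{cv0}. The derivative $|\xi|$ and the negative Sobolev weights $-\frac34-2\eps$ then balance, since $s=-\frac34$ is exactly the KPV threshold and the $2\eps$ loss is paid by the modulation gain.

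\emph{$\s_1$ maximal.} Here $\s_1^{-b}$ gives the gain, and I put the dual $g$ and $u_2$ in the sup-side of the Cauchy--Schwarz argument, so that the integration runs over $(\tau_2,\xi_2)$.

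\emph{$\s_2$ maximal.} This is the delicate high$\times$low interaction, where only $u_2$ carries the large modulation. Here I use the second or third weight. The third weight $\s_2^{\frac12+4\eps}\jb{\xi_2}^{-\frac34-\frac{13}{2}\eps}$ gives modulation decay just above $\frac12$, which is what is needed to integrate in $\tau_2$, at a small extra cost in spatial regularity. The second weight $\jb{\xi_2}^{-\frac38}\s_2^{1/4}$ handles the subregion where $|\xi_2|$ is small relative to $|\xi|,|\xi_1|$ (low-frequency $u_2$). There the loss in $\xi_2$ is mild, and the derivative $|\xi|$ must be controlled by $\s_2^{1/4}$ together with the dispersive gain $|\xi_1-\xi|\sim$ the Jacobian $|\xi(\xi-2\xi_1)|$ from \eqref{cv1}.

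In each region I reduce to showing
\[
\sup_{\tau,\xi}\int \frac{|\xi|^2\jb{\xi_1}^{\frac32+\cdots}\jb{\xi}^{\cdots}}{\s^{2(1-b)}\s_1^{2b}\s_2^{2\theta}}\,d\tau_1d\xi_1<\infty
\]
(or its permutations). I integrate in $\tau$ first using the elementary lemma $\int\jb{x-a}^{-\alpha}\jb{x-c}^{-\beta}dx\les\jb{a-c}^{-(\alpha+\beta-1)}$ for $\alpha,\beta$ with $\alpha+\beta>1$. I then integrate in $\xi_1$ via the substitution $\mu$ in \eqref{cv0}--\eqref{cv3}.

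The main obstacle is the $\s_2$-maximal, high$\times$high$\to$low or high$\times$low$\to$high subregion, where $b<1$ on the output leaves little room. I expect to use the exponent bookkeeping of Zhou, \cite[Theorem 2.1]{Zhou}, checking that $\eps$-losses match the choices $2\eps$, $\frac12\eps$, $\frac{13}2\eps$, $4\eps$ in \eqref{XX1}. If needed, I will interpolate between the second and third components to get an intermediate weight $\s_2^{\theta}$.
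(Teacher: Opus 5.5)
\textbf{Verdict.} There is a genuine gap. The paper does not prove this lemma at all: it quotes it from Zhou \cite[Theorems 1.1 and 2.1]{Zhou}. So your reduction to the estimate on $\R\times\R$, followed by an appeal to Zhou, is all the paper does. The problem is the case analysis you build around that appeal, because the case you call routine is exactly the one that fails.

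\textbf{Where it fails.} For $\s$ maximal you put $u_2$ in the first component $X^{-\frac34-2\eps,b}$ of $\cX^\eps$ and run the standard KPV calculus. That is the symmetric estimate $\|\dx(u_1u_2)\|_{X^{s,b-1}}\les\|u_1\|_{X^{s,b}}\|u_2\|_{X^{s,b}}$ at $s=-\frac34-2\eps<-\frac34$. This estimate is false, and it fails precisely in this region. Take $N\gg1$ and $A=\{(\tau,\xi):N\le\xi\le N+N^{-1/2},\ |\tau-\xi^3|\le1\}$. Set $\ft u_1=\jb{\xi}^{\frac34+2\eps}\ind_A$ and $\ft u_2(\tau,\xi)=\jb{\xi}^{\frac34+2\eps}\ind_A(-\tau,-\xi)$.
\begin{itemize}
\item Both inputs satisfy $\|u_j\|_{X^{-\frac34-2\eps,b}}\sim N^{-\frac14}$.
\item $\ft u_1*\ft u_2\ges N^{1+4\eps}$ on a set of measure $\sim N^{-\frac12}$. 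On that set $|\xi|\sim N^{-\frac12}$ and $\s\sim N^{\frac32}$, while the inputs have $\s_1,\s_2\sim1$.
\item Hence $\|\dx(u_1u_2)\|_{X^{-\frac34-2\eps,b-1}}\ges N^{-\frac12+\frac{11}{2}\eps}$, a loss of $N^{\frac{11}{2}\eps}$ against the product $N^{-\frac12}$.
\item The coherent high$\times$high$\to$high configuration behaves the same way. With both inputs supported on $\{|\xi-N|\le N^{-1/2},\,|\tau-\xi^3|\le 1\}$, it loses $N^{5\eps}$.
\end{itemize}
There is no ``modulation gain'' to pay for these $\eps$-losses. Here $\s_1,\s_2\sim1$, and the only decay available is $\s^{b-1}=\s^{-\frac12+\eps}$, which is weaker than $\s^{-\frac12}$.

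\textbf{What the extra components are for.} For the $u_2$ above, the second component is large: $\|u_2\|_{X^{-\frac38-\frac12\eps,\frac14}}\sim N^{\frac18+\frac32\eps}$. This is what saves the lemma. All three components of $\cX^\eps$ lie on the line $s=-\frac32\theta-\frac12\eps$ (cf.\ \eqref{ZH11}). Suppose the output carries the resonant modulation $\s\ges|\xi\xi_1\xi_2|$ and $|\xi_1|\sim|\xi_2|$. Then the weight $\s_2^{b}$ on $u_2$ is of no use. You must trade it for spatial regularity by putting $u_2$ in $X^{-\frac38-\frac12\eps,\frac14}$, or in an interpolant of the second and third components. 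This gains $\jb{\xi_2}^{-\frac38}$, which beats the losses above.

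\textbf{Misplaced use of the second weight.} You spend the second weight on low-frequency $u_2$ with $\s_2$ maximal, where it is not needed. There the Sobolev weight on $u_2$ is harmless. Moreover, the first component of $\cX^\eps$ is the space of $u_1$, so the $\s_2$-maximal regions are mirror images of the $\s_1$-maximal ones. The ``main obstacle'' you name is therefore not where the difficulty lies. Deferring the $\eps$-bookkeeping to Zhou does not repair this, because the wrong space has been assigned to $u_2$ in the critical region.

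\textbf{Minor point.} The bound $\int\jb{x-a}^{-\alpha}\jb{x-c}^{-\beta}dx\les\jb{a-c}^{-(\alpha+\beta-1)}$ holds only when both exponents are below $1$. With $\beta=2b>1$ you get only $\jb{a-c}^{-\min(\alpha,\beta)}$.
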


In the current stochastic setting, we also need the following bilinear estimate.

\begin{lemma}\label{LEM:bilin3}
Given $\eps > 0$, let
 $b$ and $q$ be as in \eqref{lwp0}.
Then, we have 
\begin{align}
\| \dx (u_1u_2)\|_{X^{-\frac 34 - 2\eps, b-1}(T)}
\les 
\|u_1 \|_{X^{-\frac 34 - 2\eps, b}(T)}
 \| u_2\|_{\cY^\eps(T)}
\label{bd2}
\end{align}

\noi
for any $T > 0$,  
where 
$\cY^\eps(T)$ is as in \eqref{XX1} and \eqref{XX3}.

\end{lemma}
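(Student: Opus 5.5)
We first reduce \eqref{bd2} to the global-in-time estimate
\begin{align*}
\| \dx (u_1u_2)\|_{X^{-\frac 34 - 2\eps, b-1}}
\les
\|u_1 \|_{X^{-\frac 34 - 2\eps, b}}
 \| u_2\|_{\cY^\eps}
\end{align*}
for arbitrary extensions. Taking the infimum over extensions of $u_1$ and of $u_2$ then gives \eqref{bd2}. By duality and Plancherel, we write the left-hand side as a trilinear form on the Fourier side with weights $|\xi|\jb{\xi}^{-\frac34-2\eps}\s^{b-1}$, $\jb{\xi_1}^{\frac34+2\eps}\s_1^{-b}$, and then either $\s_2^{-b}$ (with $f_2\in L^2_\xi L^q_\tau$) or $\s_2^{-\frac12+\eps}$ (with $f_2\in L^2_{\tau,\xi}$). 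We split the frequency/modulation space into regions according to which of $\s,\s_1,\s_2$ is largest, and use the resonance relation
\begin{align*}
\max(\s,\s_1,\s_2)\ges |\xi\xi_1\xi_2|.
\end{align*}
The essential point is that the $\cY^\eps$-component $u_2$ has regularity $0$, much better than $-\frac34$. This extra spatial regularity compensates for the loss in the modulation weight, namely $b$ replaced by $\frac12-\eps$, or the $L^q$ structure in time.

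\textbf{Regions where $\s$ or $\s_1$ is the maximal modulation.} Here we use the $X^{0,\frac12-\eps}$ part of the $\cY^\eps$-norm and argue as in \cite[Lemma 2.4]{KPV96}. If $\s$ is maximal, we put the $L^2_\xi$ norm on the output, apply Cauchy--Schwarz in $(\tau_1,\xi_1)$, and reduce to a supremum bound on
\[
\frac{|\xi|^2\jb{\xi}^{-\frac32-4\eps}}{\s^{2(1-b)}}\int\frac{\jb{\xi_1}^{\frac32+4\eps}\,d\tau_1d\xi_1}{\jb{\xi_1}^{\frac32+4\eps}\s_1^{2b}\s_2^{1-2\eps}}
\]
(weights rearranged by duality as needed). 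Integrating in $\tau_1$ with \cite[Lemma 2.3]{KPV96} gives $\jb{\tau-\xi^3+3\xi\xi_1\xi_2}^{-(2b+1-2\eps)+1}$. The change of variables \eqref{cv0}--\eqref{cv3} then yields a factor $|\xi|^{-\frac12}\jb{4\tau-\xi^3-4\mu}^{-\frac12}$. This is bounded provided the powers of $|\xi|$ are absorbed by $\s^{-2(1-b)}\ges |\xi\xi_1\xi_2|^{-1+2\eps}$ in the high-frequency regimes. When $\s_1$ is maximal, the same argument applies by duality with the roles of the output and $u_1$ swapped.

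\textbf{Regions where $\s_2$ is the maximal modulation.} Here we use the $X^{0,b}_q$ component instead. We apply H\"older in $\tau$ as in \eqref{bilin4a} and estimate a quantity like $M_{\tau,\xi}$, now with $L^{q'}_{\tau_2}$ on $\s_2^{-b}$ and the other two factors in $L^2$. We then use \cite[Lemma 4.2]{GTV}, with \eqref{bilin0} guaranteeing $bq'>\frac12\cdot$(needed exponent) and $4b-\frac2{q'}>1$, followed by the same change of variables. The high$\times$low$\to$high interaction with $u_2$ at low frequency is exactly what caused trouble in \cite{DDT1}. Here it is harmless because $\xi_2$ small forces $|\xi|\sim|\xi_1|$, and the derivative $|\xi|$ is then balanced by $\s_{\max}^{1/2}\ges |\xi|\,|\xi_1\xi_2|^{\frac12}/|\xi|^{\frac12}$. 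When $|\xi_2|\les1$ we instead use the $\jb{\xi_1}^{-\frac34}$ versus $\jb{\xi}^{-\frac34}$ cancellation together with the fact that $\s^{b-1}$ and $\s_1^{-b}$ already give a Strichartz-type gain (the $L^4$/Kato smoothing estimate).

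\textbf{Main obstacle.} The hardest step is the case where $\s_2$ is maximal and $u_2$ is measured only in $X^{0,b}_q$ with $q>2$. We lose square-integrability in $\tau_2$ and must rely on the $L^{q'}$ integral bound. The exponent bookkeeping then has to close with margins of order $\eps$. I would first try to handle this case by interpolating the two components of $\cY^\eps$ if the direct GTV-type computation leaves an $\eps$-loss in $|\xi|$. I would also check that the $-2\eps$ spatial weight on $u_1$ and on the output absorbs this loss in the high$\times$high$\to$low regime.
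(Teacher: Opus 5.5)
The modulation-based split is not itself the problem. The regions where $\s$ or $\s_1$ is largest do close with the $X^{0,\frac12-\eps}$-norm, provided the variables you fix depend on the frequency configuration; for instance, ``swapping the roles of the output and $u_1$'' diverges when $\s_1$ is maximal and $|\xi_1|$ is very small, and there you must fix the output.

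\textbf{The gap.} It is the delicate region inside your ``$\s_2$ maximal'' case: $|\xi|\sim|\xi_1|\sim N$, $|\xi_2|\ll1$, in the resonant configuration $\s_2\sim N^2|\xi_2|$, $\s,\s_1\les1$. The mechanisms you invoke there do not work.
\begin{itemize}
\item For $|\xi|\sim|\xi_1|$ the factor $\jb{\xi_1}^{\frac34+2\eps}/\jb{\xi}^{\frac34+2\eps}$ is $\sim1$, so the spatial weights give nothing.
\item Bilinear smoothing between the output and $u_1$ gains at most $(N|\xi_2|)^{-\frac12}$, which is no gain as $\xi_2\to0$.
\item Kato smoothing would need maximal-function ($L^\infty_t$) control of $u_2$, which $X^{0,b}_q$ with $bq'<1$ does not provide.
\end{itemize}
The gain must come from $\s_2^{b}$ with $b>\frac12$. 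To see this, take $M=cN^{-\frac12}$ with $c$ small, $\hat u_2=\ind_{[M,2M]}(\xi_2)\ind_{\{|\tau_2-3N^2\xi_2|\le1\}}$ and $\hat u_1=\ind_{\{|\xi_1-N|\le c(NM)^{-1}\}}\ind_{\{|\tau_1-\xi_1^3|\le1\}}$. The output then sits at modulation $O(1)$. The ratio of the two sides of \eqref{bd2}, with $\|u_2\|_{X^{0,\frac12-\eps}}$ in place of $\|u_2\|_{\cY^\eps}$, is $\sim(N^2M)^{\eps}\to\infty$.

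Your computation also does not close as written, under either reading.
\begin{itemize}
\item If you fix $(\tau_2,\xi_2)$ and apply H\"older in $\tau_2$, the $\xi_1$-Jacobian is $3|\xi_2||\xi+\xi_1|\sim N|\xi_2|$. The resulting bound is $\sim N^{-\frac12+}|\xi_2|^{-1+}$, which diverges for $|\xi_2|\ll N^{-\frac12}$.
\item If you fix the output, \eqref{bilin4a} and \cite[Lemma 4.2]{GTV} do not apply verbatim, because the output and $f_1$ lie only in $L^2_\tau$, not $L^q_\tau$.
\end{itemize}

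\textbf{The missing idea} is the paper's Case 1. For $|\xi|\ges|\xi_1|$, whichever modulation is largest:
\begin{itemize}
\item Use the $X^{0,b}_q$-norm on $u_2$.
\item Apply H\"older in $\tau$ on the output side only ($\frac12=\frac{3\eps}2+\frac1q$), passing from $L^2_\tau$ to $L^q_\tau$ at a cost of $\s^{2\eps}$.
\item Fix $(\tau,\xi)$ and use Cauchy--Schwarz in $(\tau_1,\xi_1)$ together with Young's inequality to bound by $\|f_1\|_{L^2_{\tau,\xi}}\|f_2\|_{L^2_\xi L^q_\tau}$, as in \eqref{Y2}.
\end{itemize}
This reduces everything to $\sup_{\tau,\xi}\wt M_{\tau,\xi}<\infty$, with $\s_1^{-b}\s_2^{-b}$ ($b>\frac12$) in $L^2_{\tau_1,\xi_1}$ and output exponent $\frac12-3\eps\ge\frac14$. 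That is exactly \cite[Lemma 2.4]{KPV96}. With the output fixed, the Jacobian $3|\xi||\xi-2\xi_1|$ does not degenerate as $\xi_2\to0$.

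This fixed-output bound fails for $|\xi|\ll|\xi_1|$. With $\s_2$ maximal, $|\xi|\sim1$ and $\s\sim1$, one finds $\wt M_{\tau,\xi}\sim N^{\frac14}$, which the $\eps$-weights cannot absorb. That is why the paper splits by frequency rather than modulation, and handles $|\xi|\ll|\xi_1|$ with the $X^{0,\frac12-\eps}$-norm by interpolating between \eqref{Z2} and \eqref{Z3}. To keep your scheme, you must subdivide the $\s_2$-maximal region by frequency in this way: fixed output for $|\xi|\ges|\xi_1|$, and fixed $(\tau_2,\xi_2)$ or the paper's interpolation for $|\xi|\ll|\xi_1|$.
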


\begin{proof}
As in the proof of Proposition \ref{PROP:bilin1}, 
it suffices to prove \eqref{bd2}
without a time restriction.
We split the proof into two cases:
$|\xi| \ges |\xi_1|$ and $|\xi| \ll |\xi_1|$.
When $|\xi| \ges |\xi_1|$, 
we prove
\begin{align}
\| \dx (u_1u_2)\|_{X^{-\frac 34 - 2\eps, b-1}}
\les 
\|u_1 \|_{X^{-\frac 34 - 2\eps, b}}
 \| u_2\|_{X^{0, b}_q}, 
\label{bd3}
\end{align}

\noi
while, when $|\xi| \ll |\xi_1|$, 
we prove
\begin{align}
\| \dx (u_1u_2)\|_{X^{-\frac 34 - 2\eps, b-1}}
\les 
\|u_1 \|_{X^{-\frac 34 -2 \eps, b}}
+  \| u_2\|_{X^{0, \frac 12 - \eps}}.
\label{bd4}
\end{align}

\medskip

\noi
$\bul$ {\bf Case 1:} $|\xi| \ges |\xi_1|$.\\
\indent
The bound \eqref{bd3} follows once we prove
\begin{align}
\bigg\|
\intt_{\substack{\tau = \tau_1 + \tau_2\\\xi = \xi_1 + \xi_2}}
\frac{\ind_{|\xi| \ges |\xi_1|} \cdot |\xi|} {\s^{1-b}}
\frac{ f_1(\tau_1, \xi_1)f_2(\tau_2, \xi_2) d\tau_1 d\xi_1}
{\s_1^{b}
\s_2^{b}}
\bigg\|_{L^2_{\tau, \xi}}
\les 
 \|f_1\|_{L^2_{\tau, \xi}}
 \|f_2\|_{L^2_\xi L^q_\tau}, 
\label{Y1}
\end{align}

\noi
where $\s$, $\s_1$, and $\s_2$ are as in \eqref{bilin4}.
For simplicity of notation, 
we  drop
the conditions 
$\tau = \tau_1 + \tau_2$, 
 $\xi = \xi_1 + \xi_2$, 
and $|\xi| \ges |\xi_1|$
in the following 
but it is understood that these conditions hold.

By applying H\"older's inequality in $\tau$
(with $\frac 12 = \frac {3\eps}{2} + \frac 1q$, 
where $q$ is as in \eqref{lwp0}) and proceeding as in 
\eqref{bilin4a}, we have 
\begin{align}
\begin{split}
\text{LHS of }\eqref{Y1}
& \les 
\bigg\|
\intt_{\substack{\tau = \tau_1 + \tau_2\\\xi = \xi_1 + \xi_2}}
\frac{\ind_{|\xi| \ges |\xi_1|} \cdot |\xi|} {\s^{1-b-2\eps}}
\frac{ f_1(\tau_1, \xi_1)f_2(\tau_2, \xi_2) d\tau_1 d\xi_1}
{\s_1^{b}
\s_2^{b}}
\bigg\|_{L^2_{\xi}L^q_\tau}\\
& \le 
\bigg\|
\wt M_{\tau, \xi}\, 
\Big\| 
 \prod_{j = 1}^2
 f_1(\tau_1, \xi_1)
  f_2(\tau - \tau_1, \xi - \xi_1)
\Big \|_{L^2_{\tau_1, \xi_1}} 
\bigg\|_{L^2_\xi L^q_\tau}\\
& \le \sup_{\tau, \xi \in \R} \wt M_{\tau, \xi}\, 
 \|f_1\|_{L^2_{\tau, \xi}}
 \|f_2\|_{L^2_\xi L^q_\tau}, 
\end{split}
\label{Y2}
\end{align}

\noi
where $\wt M_{\tau, \xi}$ is given by 
\begin{align*}
\wt M_{\tau, \xi} = \bigg\|
\frac{ |\xi|} {\s^{1-b-2\eps}\s_1^{b}
\s_2^{b}}
\bigg\|_{L^2_{\tau_1, \xi_1}}.
\end{align*}

\noi
By choosing $\eps > 0$
sufficiently small with \eqref{lwp0}, we have  $1-b -2\eps = \frac 12 - 3\eps \ge \frac 14$, 
and thus 
it follows from  \cite[Lemma 2.4]{KPV96} that 
\begin{align}
\sup_{\tau, \xi \in \R}
\wt M_{\tau, \xi} < \infty.
\label{Y3}
\end{align}

\noi
Hence, the bound \eqref{Y1}
follows from \eqref{Y2} and \eqref{Y3}.

\medskip

\noi
$\bul$ {\bf Case 2:} $|\xi| \ll |\xi_1|$.\\
\indent
The bound
\eqref{bd4} follows once we prove 
\begin{align}
\bigg\|
\intt_{\substack{\tau = \tau_1 + \tau_2\\\xi = \xi_1 + \xi_2}}
\frac{
\ind_{|\xi| \ll |\xi_1|}\cdot
|\xi|\jb{\xi_1}^{\frac 34  + 2\eps}} {\jb{\xi}^{\frac 34 + 2\eps}}
\frac{ f_1(\tau_1, \xi_1)f_2(\tau_2, \xi_2) d\tau_1 d\xi_1}
{\s^{1-b}\s_1^{b}
\s_2^{\frac 12 - \eps}}
\bigg\|_{L^2_{\tau, \xi}}
\les \prod_{j = 1}^2 \|f_j\|_{L^2_{\tau, \xi}}, 
\label{Z1}
\end{align}

\noi
where $\s$, $\s_1$, and $\s_2$ are as in \eqref{bilin4}.

In this case, we have $|\xi_1|\sim |\xi_2|\gg |\xi|$.
Then, 
from the proof of \cite[Theorem 1.1]{KPV96}, we have
\begin{align}
\bigg\|
\intt_{\substack{\tau = \tau_1 + \tau_2\\\xi = \xi_1 + \xi_2}}
\frac{|\xi|\jb{\xi_1}^{\frac 32-2\eps_1}}
 {\jb{\xi}^{\frac 34-\eps_1}}
\frac{ f_1(\tau_1, \xi_1)f_2(\tau_2, \xi_2) d\tau_1 d\xi_1}
{\s^{1-b} \s_1^{b}
\s_2^{b}}
\bigg\|_{L^2_{\tau, \xi}}
\les \prod_{j = 1}^2 \|f_j\|_{L^2_{\tau, \xi}}
\label{Z2}
\end{align}

\noi
for small $\eps, \eps_1> 0 $ such that 
\begin{align}
3\eps < \eps_1.
\label{Z2a}
\end{align}

\noi
On the other hand, by Young's and Cauchy-Schwarz's inequalities with~\eqref{lwp0}, we have 
\begin{align}
\bigg\|
\intt_{\substack{\tau = \tau_1 + \tau_2\\\xi = \xi_1 + \xi_2}}
\frac{1}
 {\jb{\xi_1}^{\frac 12+\eps}}
\frac{ f_1(\tau_1, \xi_1)f_2(\tau_2, \xi_2) d\tau_1 d\xi_1}
{\s_1^{b}}
\bigg\|_{L^2_{\tau, \xi}}
\les \prod_{j = 1}^2 \|f_j\|_{L^2_{\tau, \xi}}.
\label{Z3}
\end{align}

We first consider the case $|\xi|\les 1$.
In this case, we have 
\begin{align}
\frac{|\xi|\jb{\xi_1}^{\frac 34  + 2\eps}} {\jb{\xi}^{\frac 34 + 2\eps}}
\sim 
|\xi|\jb{\xi_1}^{\frac 34  + 2\eps}
\quad \text{and}\quad
\frac{|\xi|\jb{\xi_1}^{\frac 32-2\eps_1}}
 {\jb{\xi}^{\frac 34-\eps_1}}
\sim
|\xi|\jb{\xi_1}^{\frac 32-2\eps_1}.
\label{Z4}
\end{align}

\noi
Let  $\ta  = \ta(\eps) \in (0, 1)$ such that 
\begin{align}
\frac 12 - \eps \ge (1- \ta) b = (1- \ta)\Big(\frac 12 + \eps \Big).
\label{Z4a}
\end{align}

\noi
By choosing $\eps > 0$ small, we can make $\ta > 0$ arbitrarily small.
We note that 
\begin{align}
\begin{split}
|\xi|\jb{\xi_1}^{\frac 34  + 2\eps}
& \le 
\big(|\xi|^{\frac 1{1-\ta}}\jb{\xi_1}^{\frac 32  - 2\eps_1}\big)^{1-\ta}
\big(\jb{\xi_1}^{-\frac 12 - \eps}\big)^\ta\\
& \les 
\big(|\xi|\jb{\xi_1}^{\frac 32  - 2\eps_1}\big)^{1-\ta}
\big(\jb{\xi_1}^{-\frac 12 - \eps}\big)^\ta
\end{split}
\label{Z5}
\end{align}

\noi
for $\ta, \eps, \eps_1 > 0$ sufficiently small, where we used $|\xi|\les 1$
at the second inequality.
Hence, in view of  \eqref{Z4}, \eqref{Z4a}, and \eqref{Z5}, 
we obtain the bound~\eqref{Z1} 
by  
interpolating 
\eqref{Z2} and  
\eqref{Z3}, 
provided that $\eps, \eps_1 > 0$ are sufficiently small, 
satisfying \eqref{Z2a}.

Next, we  consider the case $1\ll |\xi|\ll|\xi_1|$.
In this case, we have 
\begin{align}
\frac{|\xi|\jb{\xi_1}^{\frac 34  + 2\eps}} {\jb{\xi}^{\frac 34 + 2\eps}}
\sim 
\jb{\xi}^{ \frac 14  - 2\eps}
\jb{\xi_1}^{\frac 34  + 2\eps}
\quad \text{and}\quad
\frac{|\xi|\jb{\xi_1}^{\frac 32-2\eps_1}}
 {\jb{\xi}^{\frac 34-\eps_1}}
\sim 
\jb{\xi}^{\frac 14 + \eps_1}
\jb{\xi_1}^{\frac 32-2\eps_1}.
\label{Z6}
\end{align}

\noi
Since we have 
\begin{align}
\jb{\xi}^{ \frac 14  - 2\eps}
\jb{\xi_1}^{\frac 34  + 2\eps}
\le \big(\jb{\xi}^{\frac 14 + \eps_1}
\jb{\xi_1}^{\frac 32-2\eps_1}\big)^{1-\ta}
\big(\jb{\xi_1}^{-\frac 12 - \eps}\big)^\ta
\label{Z7}
\end{align}

\noi
for sufficiently small  $\ta > 0$, 
the bound \eqref{Z1} follows
from interpolating \eqref{Z2} and \eqref{Z3}
with \eqref{Z4a}, \eqref{Z7}, and \eqref{Z6}, 
provided that $\eps, \eps_1 > 0$ are sufficiently small, 
satisfying \eqref{Z2a}.
\end{proof}

\subsection{Proof of Theorem \ref{THM:2}}
\label{SUBSEC:4.3}

Given $u_0 \in L^2(\R)$ and $\phi \in \HS(L^2(\R); L^2(\R))$, 
let $u_1$ and $u_2$ be two solutions to \eqref{mild1}
on $[0, T]$ with $u_1|_{t = 0} = u_2|_{t = 0} = u_0$.
More precisely,
for $j = 1, 2$, 
there exists a set $\Si_j \subset \Om$ with $\PP(\Si_j) = 1$
such that, 
for  $\o \in \Si_j$, 
there exists a global solution $u_j
\in C(\R; L^2(\R))$ to \eqref{KDV1}
with a realization of the noise (and hence of 
the stochastic convolution $\Psi = \Psi(\o)$ in \eqref{mild1})
for $\o \in \Si_j$.
By setting $\Si = \Si_1 \cap \Si_2$, 
we have $\PP(\Si) = 1$
and, 
for each $\o \in \Si$, 
$u_1 =  u_1(\o)$
and 
$u_2 =  u_2(\o)$ 
are global solutions to \eqref{KDV1}
with $u_1|_{t = 0} = u_2|_{t = 0} = u_0$
and 
 the realization of the noise 
for $\o \in \Si$.
In the following, we fix $\o \in \Si$
and suppress the dependence on $\o \in \Si$.

Consider the following first order expansion:
\begin{align*}
u_j = \Phi + v_j, \quad j = 1, 2, 
\end{align*}

\noi
where $\Phi$ is as in \eqref{ZH3}, 
such that 
 $v_j$ satisfies \eqref{KDV3}
(where $v$ is replaced by $v_j$).
Then, from~\eqref{ZH11} (or rather the computation leading to \eqref{ZH11}), 
we have 
\begin{align}
\sup_{j = 1, 2}\|v_j\|_{\cX^\eps(T)} \le C_0  \dl
\label{ZZ0}
\end{align}

\noi
for some $C_0 > 0$, 
provided that 
$T = T\big(\o, \dl, \|u_0\|_{L^2}, \|\phi\|_{\HS(L^2; L^2)}\big) > 0$ is sufficient small.

The difference $w = v_1 - v_2$ satisfies the following equation:
\begin{align}
\begin{cases}
\dt w + \dx^3 w + \dx \big(w(v_1+v_2)\big) + 2 \dx (w \Phi)= 0\\
w|_{t = 0} = 0.
\end{cases}
\label{ZZ1}
\end{align}

\noi
By applying Lemma \ref{LEM:lin1}
to the Duhamel formulation of \eqref{ZZ1}
and then applying Lemmas~\ref{LEM:bilin2} and \ref{LEM:bilin3}, 
we have 
\begin{align}
\|w \|_{X^{-\frac 34 - 2\eps, b}(T)}
&
  \le C_1
\|w \|_{X^{-\frac 34 - 2\eps, b}(T)}
\bigg(\sum_{j = 1}^2 \|v_j \|_{\cX^\eps(T)}
+  \| \Phi \|_{\cY^\eps(T)}\bigg)
\label{ZZ2}
\end{align}

\noi
for any $T > 0$.
Given  small $\dl > 0$ with $C_0 C_1 \dl < 1$, 
it follows from 
\eqref{ZZ2},  \eqref{ZZ0},  and~\eqref{Xsb4}
that 
\begin{align}
\|w \|_{X^{-\frac 34 - 2\eps, b}(T)}
&
  \le C_0 C_1 \dl
\|w \|_{X^{-\frac 34 - 2\eps, b}(T)}, 
\label{ZZ3}
\end{align}

\noi
provided that 
$T = T\big(\o, \dl, \|u_0\|_{L^2}, \|\phi\|_{\HS(L^2; L^2)}\big) > 0$ is sufficient small.
Hence, 
we conclude from \eqref{ZZ3} that 
$w \equiv 0$ on the time interval $[0, T]$.
By iteratively applying this argument, 
we conclude that 
$w \equiv 0$ on $\R$
and hence $u_1 \equiv  u_2$.

\appendix

\section{Boundedness of 
the multiplication by a sharp cutoff function on 
 Fourier-Lebesgue and Sobolev spaces}\label{SEC:A}

In this appendix, we study boundedness
properties
of the
multiplication by a sharp cutoff function 
in the Fourier-Lebesgue spaces and Sobolev spaces.

\begin{lemma}\label{LEM:X2}
Given $1 < q < \infty$, let $0 \le b < \frac{1}{q'}$.
Then, we have
\begin{align}
\| \ind_I \cdot f \|_{\F L^{b, q}(\R)} \les \| f\|_{\F L^{b, q}(\R)}
\label{FL0a}
\end{align}

\noi
for any interval $I \subset \R$, 
where the implicit constant is independent of $I$.

\end{lemma}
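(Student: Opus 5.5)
Since $\ind_I\cdot f$ has Fourier transform $\widehat{\ind_I}*\ft f$, we need the convolution with $\widehat{\ind_I}$ to be bounded on the weighted space $L^q(\jb{\tau}^{bq}d\tau)$. First we reduce to half-lines. Write $I=[\alpha,\beta]$, so $\ind_I=\ind_{[\alpha,\infty)}-\ind_{(\beta,\infty)}$ (up to a null set). Multiplication by $\ind_{[\alpha,\infty)}(t)$ equals $M_{\alpha}\,H\,M_{-\alpha}$, where $H$ is multiplication by $\ind_{[0,\infty)}$ and $M_\alpha g(t)=g(t-\alpha)$ is translation. Translation in $t$ is modulation on the Fourier side, which preserves $|\ft f(\tau)|$. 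Hence $\|M_\alpha g\|_{\F L^{b,q}}=\|g\|_{\F L^{b,q}}$, and the constant does not depend on $I$. It therefore suffices to show that multiplication by the Heaviside function is bounded on $\F L^{b,q}$.

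On the Fourier side, $\widehat{Hf}=\frac12\ft f+\frac{i}{2}\mathcal H\ft f$ up to normalization, where $\mathcal H$ is the Hilbert transform acting in the $\tau$ variable; this follows from $\widehat{\ind_{[0,\infty)}}=\frac12\delta+\frac{c}{i\tau}$ in the principal value sense. The claim thus reduces to the weighted estimate
\[
\|\jb{\tau}^b\,\mathcal H g\|_{L^q_\tau}\les \|\jb{\tau}^b g\|_{L^q_\tau}.
\]
This is boundedness of $\mathcal H$ on $L^q(w)$ with weight $w(\tau)=\jb{\tau}^{bq}$. By the Hunt--Muckenhoupt--Wheeden theorem, $\mathcal H$ is bounded on $L^q(w)$ if and only if $w\in A_q$. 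A power weight $|\tau|^{\gamma}$ belongs to $A_q$ exactly when $-1<\gamma<q-1$. The weight $\jb{\tau}^{\gamma}$ is in $A_q$ under the same condition; it is locally comparable to $1$, and at large scales it behaves like $|\tau|^\gamma$.

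It remains to check the exponent condition for $\gamma=bq$. Nonnegativity gives $bq\ge0>-1$. For the upper bound, $bq<q-1$ is equivalent to $b<1-\frac1q=\frac1{q'}$, which is precisely the hypothesis. This also explains why the condition $b<\frac1{q'}$ is sharp: at $b=\frac1{q'}$ the weight leaves $A_q$, matching the fact that functions in $\F L^{b,q}$ with $bq'>1$ are continuous, so a jump cannot survive.

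The only step that is not routine is verifying $\jb{\tau}^{bq}\in A_q$, and we handle it by citing the standard power-weight computation. If we want to avoid $A_p$ theory, we can instead prove the weighted Hilbert bound by hand. Split the kernel $\frac{1}{\tau-\tau'}$ into three regions: $|\tau'|\le|\tau|/2$, $|\tau'|\ge2|\tau|$, and $|\tau|\sim|\tau'|$. In the diagonal region the weights are comparable, so the unweighted $L^q$ bound for $\mathcal H$ applies. The two off-diagonal regions are dominated by Hardy-type operators $\frac1{|\tau|}\int_{|\tau'|\le|\tau|}$ and $\int_{|\tau'|\ge|\tau|}\frac{d\tau'}{|\tau'|}$. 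Weighted Hardy inequalities bound these on $L^q(\jb{\tau}^{bq})$ exactly when $bq<q-1$ and $bq>-1$, respectively. Either route gives \eqref{FL0a}.
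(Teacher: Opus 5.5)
Your proposal is correct and follows essentially the paper's route: split $\ind_I$ into two translated half-line cutoffs, write each on the Fourier side as the identity plus a (modulated) Hilbert transform, and invoke the $A_q$ characterization of power weights, where $b<\frac{1}{q'}$ is exactly the condition $bq<q-1$. The only difference is minor: you check $A_q$ directly for the inhomogeneous weight $\jb{\tau}^{bq}$, whereas the paper proves the homogeneous bound with $|\tau|^{bq}$ (valid for $-\frac 1q<b<\frac1{q'}$) and then adds the unweighted case $b=0$, using $\jb{\tau}^{bq}\sim 1+|\tau|^{bq}$.
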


When $q = 2$, \eqref{FL0a} reduces to 
the following well-known bound:
\begin{align*}
\| \ind_I \cdot f \|_{H^b(\R)} \les \| f\|_{H^b(\R)}
\end{align*}

\noi
for $0 \le b < \frac 12$.
In this case, we
can appeal
to the physical space characterization
of $H^b(\R)$ and an interpolation;
see the proof of \cite[Lemma 2.1]{DD2}.
However, for general $q\ne 2$, 
the Fourier-Lebesgue norm
does not have such a physical space characterization.
Our proof is based on viewing 
$\ind_I$ 
as a suitable linear combination of the identity operator
and the Hilbert transform
on the Fourier side.
Then, we invoke the boundedness
of the Hilbert transform on $L^q(\R)$ with an $A_q$-weight.

\begin{proof}

We   define
the homogeneous Fourier-Lebesgue space $\dot {\F L}^{b, q}$
to be  the completion of the Schwartz class under the semi-norm:
\begin{align*}
\|f\|_{\dot{\F L}^{b, q}} = \big\| |\tau|^b \ft f(\tau)\big\|_{L^q}.
\end{align*}

\noi
We first prove an analogue of \eqref{FL0a} in Lemma \ref{LEM:X2}
for the homogeneous Fourier-Lebesgue spaces.
In fact, we prove the following bound;
given  $1 < q < \infty$ and $-\frac 1q <   b < \frac{1}{q'}$, 
we have 
\begin{align}
\| \ind_I \cdot  f \|_{\dot {\F  L}^{b, q}} \les \| f\|_{ \dot {\F L}^{b, q}}
\label{FL3}
\end{align}

\noi
for any interval $I \subset \R$.
Namely, we will prove
\begin{align}
\int_\R |\ft {\ind_I f}(\tau)|^q w_{b, q}(\tau) d\tau \les 
\int_\R |\ft {f}(\tau)|^q w_{b, q}(\tau) d\tau,  
\label{FL4}
\end{align}

\noi
where $w_{b, q}(\tau) = |\tau|^{bq}$.

Recall the Hilbert transform $H$ is defined by
\begin{align*}
\ft{Hg} (t) = -i \sgn(t) \ft g(t), 
\end{align*}

\noi
where the signum function $\sgn$ is given by 
\begin{align*}
\sgn (t) = 
\begin{cases}
1, & \text{if } t > 0, \\
0, & \text{if } t = 0,\\
-1, & \text{if } t < 0.
\end{cases}
\end{align*}

\noi
Given an interval $I = [a,b] \subset \R$, 
we have 
\begin{align*}
\ind_{[a, b]} = \ind_{[a, \infty)}(t) - \ind_{[b, \infty)}(t)
= \frac{\sgn(t - a) - \sgn(t - b)}{2}
\end{align*}

\noi
except for $t = a, b$.
Thus, by a direct computation, we have
\begin{align}
\begin{split}
& 
\F(\ind_{[a, b]}\, f)(\tau) 
 = 
\F^{-1}(\ind_{[a, b]}\, f)(-\tau) \\
& \quad = \frac i 2\Big( e^{-ia\tau} H(e^{-ia \cdot} \F^{-1}(f))(-\tau)
-  e^{-ib\tau} H(e^{-ib \cdot} \F^{-1}(f))(-\tau)\Big).
\end{split}
\label{FL5}
\end{align}

In view of \eqref{FL5}, 
we see that the bound \eqref{FL4} (and hence \eqref{FL3})
follows once we show that the Hilbert transform is bounded on 
the weighted Lebesgue space $L^q(\R, w_{b, q})$.
Recall that the Hilbert transform is a Calder\'on-Zygmund operator
with a standard kernel (see \cite[Definition 7.4.1]{Gra}).
Hence, in view of   \cite[Theorem 7.4.6]{Gra}, 
we only need to check 
if the weight $w_{b, q}(\tau) = |\tau|^{bq}$
 is an $A_q$ weight.
It follows from   \cite[Example 7.17]{Gra} that 
for $1 < q < \infty$, 
the weight $w_{b, q} = |\tau|^{bq}$ is an $A_q$ weight if and only if
$-1 < bq < q-1$, namely
\[ -\frac 1q < b < \frac{1}{q'}.\]

\noi
This proves \eqref{FL4} and hence \eqref{FL3}.

When $b = 0$, \eqref{FL3} yields
\begin{align}
\| \ind_I \cdot f \|_{\F L^{0, q}} \les \| f\|_{\F  L^{0, q}}.
\label{FL6}
\end{align}

\noi
Then, 
the bound \eqref{FL0a} 
follows from \eqref{FL3} (with $0 \le b <  \frac{1}{q'})$
and \eqref{FL6}.
By a scaling argument, it is easy to see that 
the implicit constants in the estimates \eqref{FL3}
and \eqref{FL6} are independent of $I$.
As a consequence, the  implicit constant in  \eqref{FL0a} 
is also independent of $I$.
\end{proof}

As a corollary to Lemma \ref{LEM:X2}, we present the following boundedness
result on Sobolev spaces.
Given $1 \le q \le \infty$ and $b \in \R$, 
let   $W^{b, q}(\R)$ be the nonhomogeneous Sobolev space 
(= Bessel potential space)
defined via the norm{\rm :} 
\[ \|f \|_{ W^{b, q}(\R)} =  \big\| \jb{\nb}^b f \big\|_{L^q(\R)},\]

\noi
where $\jb{\nb}^b$ is the Bessel potential of order $-b$ given by 
the Fourier multiplier $\jb{\tau}^b$.

\begin{corollary}
\label{COR:Bdd}
Let $ 1 < q < \infty$ and $0 \le b <\max\big( \frac 1q,  \frac 1{q'} \big)$.
Then, we have 
\begin{align}
\|\ind_I \cdot  f\|_{W^{b,q}(\R)} \les \|f\|_{ W^{b,q}(\R)},
\label{BL0}
\end{align}

\noi
for any interval $I \subset \R$, 
where the implicit constant is independent of $I$.

\end{corollary}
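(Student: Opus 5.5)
The plan is to reduce \eqref{BL0} to a statement about a single half-line cutoff, and then to treat the two ranges $0 \le b < \frac 1q$ and $0 \le b < \frac 1{q'}$ separately. The second range is what the corollary's wording ``as a corollary to Lemma~\ref{LEM:X2}'' points to, and it is where I expect the difficulty.

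\emph{Reduction.} As in the proof of Lemma~\ref{LEM:X2}, for $I = [a, c]$ we write $\ind_I = \ind_{[a, \infty)} - \ind_{[c, \infty)}$ almost everywhere. It therefore suffices to bound multiplication by $\ind_{[a, \infty)}$ on $W^{b, q}(\R)$ uniformly in $a$. Translation is an isometry on $W^{b, q}(\R)$, so we may take $a = 0$. Since $\ind_{[0, \infty)} = \frac{1 + \sgn}{2}$, it is enough to bound $f \mapsto \sgn \cdot f$ on $W^{b, q}(\R)$. For $b = 0$ this is trivial. Uniformity in $I$ then holds automatically, because no scale enters after the translation.

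\emph{Range $0 \le b < \frac 1q$.} I would first prove the homogeneous bound $\| |\nb|^b (\sgn \cdot f)\|_{L^q} \les \||\nb|^b f\|_{L^q}$. Write $\sgn\cdot f = f - 2\ind_{(-\infty,0)} f$ and use the physical-space characterization $\||\nb|^b g\|_{L^q}\sim \|g\|_{\dot F^{b}_{q,2}}$ together with the difference characterization of $\dot F^b_{q,2}$ for $0<b<1$. The commutator of the jump with a difference $\Delta_h$ is localized to $|t| \le |h|$. This reduces matters to a Hardy-type inequality
\[
\Big\| \frac{f(t)}{|t|^{b}} \Big\|_{L^q} \les \| |\nb|^b f\|_{L^q},
\]
which holds exactly when $bq < 1$, since then $|t|^{-bq}$ is locally integrable. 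Equivalently, this step amounts to the $A_q$ condition of \cite[Example 7.17]{Gra} for the weight $|t|^{-bq}$, paralleling the weight argument in Lemma~\ref{LEM:X2}. The inhomogeneous statement then follows from $\|\jb{\nb}^b g\|_{L^q}\sim \|g\|_{L^q}+\||\nb|^b g\|_{L^q}$ together with the case $b = 0$.

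\emph{Range $0 \le b < \frac 1{q'}$.} Here I would use duality. The multiplication operator by $\ind_I$ is symmetric with respect to the $L^2$ pairing, and $(W^{b, q})^* = W^{-b, q'}$. Hence boundedness on $W^{b, q}$ is equivalent to boundedness on $W^{-b, q'}$. I would then obtain the negative-regularity bound on $W^{-b, q'}$ for $0\le b < \frac{1}{q'}$ from the previous step, combined with complex interpolation in $(b, q)$ between $b = 0$ and the already established range. This step is the main obstacle. For $q > 2$ the target $W^{b, q}$ with $\frac 1q \le b < \frac1{q'}$ lies outside the range of the Hardy inequality. The duality/interpolation bookkeeping must therefore really be carried out within the region $-\frac 1{q'} < b < \frac 1q$ and then transported back. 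I would check this exponent matching carefully before anything else, since it is the delicate point in matching the range claimed in \eqref{BL0}.
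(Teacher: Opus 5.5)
Your argument for $0\le b<\frac1q$ is sound. The second range, however, is a genuine gap that cannot be closed, because the statement is false there. Take $q>2$ and $\frac1q<b<\frac1{q'}$, which is a nonempty part of the claimed range $b<\max(\frac1q,\frac1{q'})=\frac1{q'}$. Then $bq>1$, so $W^{b,q}(\R)\hookrightarrow C(\R)$. With $I=[0,1]$ and $f\in\mathcal{S}(\R)$ satisfying $f(0)=1$, the function $\ind_I f$ jumps at $t=0$, so $\ind_I f\notin W^{b,q}(\R)$ even though $f\in W^{b,q}(\R)$.

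Your duality/interpolation scheme could not have reached this range anyway. Everything obtainable from $0\le b<\frac1q$ by the duality $W^{b,q}\leftrightarrow W^{-b,q'}$ and complex interpolation stays in the strip $\frac1q-1<b<\frac1q$ of the $(b,\frac1q)$-plane. This strip is convex and is mapped to itself by $(b,\frac1q)\mapsto(-b,1-\frac1q)$. Concretely, the dual of $W^{b,q}$ with $b\ge\frac1q$ is $W^{-b,q'}$ with $-b\le-\frac1q$, which lies outside the corresponding range $(-\frac1q,\frac1{q'})$ for the exponent $q'$, so there is nothing to ``transport back''. The correct statement, sharp up to the endpoint by the embedding argument above, is $0\le b<\frac1q$ for every $1<q<\infty$. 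It agrees with the stated range only when $q\le 2$.

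For comparison, the paper does not use a Hardy inequality. It complex-interpolates the $H^{\frac12-\eps}$ bound (Lemma~\ref{LEM:X2} with $q=2$) against the trivial $L^p$ bound. With $\frac1q=\frac{1-\theta}2+\frac\theta p$ and $b=(1-\theta)(\frac12-\eps)$:
\begin{itemize}
\item letting $p\to\infty$ only produces exponents $q\ge 2$, with $b<\frac1q$;
\item letting $p\to 1$ only produces $q\le 2$, with $b<\frac1{q'}$.
\end{itemize}
So that argument actually gives \eqref{BL0} only for $0\le b<\min(\frac1q,\frac1{q'})$, not for the maximum.

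Your first step is heavier than the paper's two-line interpolation. It uses the difference characterization, localizes the commutator with the jump to $|t|\le|h|$, and invokes the Hardy inequality $\||t|^{-b}f\|_{L^q}\les\||\nb|^b f\|_{L^q}$ for $bq<1$. But it is the argument that gives the sharp result: it also covers $\frac1{q'}\le b<\frac1q$ when $1<q<2$, which the paper's interpolation misses. So you should drop the second range and state the corollary for $0\le b<\frac1q$, which is exactly what your first step proves.
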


We refer interested readers
to the monograph \cite{MS} on further studies
on 
Sobolev multipliers.

\begin{proof}
[Proof of Corollary \ref{COR:Bdd}]

Given small  $\eps > 0$, 
it follows from 
Lemma \ref{LEM:X2}
with $q = 2$ that 
\begin{align}
\| \ind_I \cdot f \|_{H^{\frac 12 - \eps} (\R)} \les \| f\|_{H^{\frac 12 - \eps}(\R)}, 
\label{BL1}
\end{align}

\noi
where the implicit constant is independent of $I$.
Given any $1 \le p \le \infty$, we also have the following trivial bound:
\begin{align}
\| \ind_I \cdot f \|_{L^p (\R)} \le \| f\|_{L^p(\R)}.
\label{BL2}
\end{align}

\noi
Then, the bound \eqref{BL0} follows
from interpolating
\eqref{BL1} and \eqref{BL2} (with $1 < p < \infty$);
see
\cite[Theorems~4.4.1 and~6.4.5]{BL}.
By taking  $p \to \infty$ and $\eps \to 0$, 
we obtain the range
 $0 \le b < \frac1q$,  
while the range $0 \le b < \frac 1{q'}$
follows from  taking  $p \to 1$ and $\eps \to 0$.
\end{proof}

\begin{ackno}\rm
T.O.~would like to thank  Yuzhao Wang
for a helpful discussion on the material in Appendix \ref{SEC:A}.
K.T.~would like to thank the School of Mathematics at the University of Edinburgh for its hospitality, where this
manuscript was prepared during his visit.
D.G.~and T.O.~were supported by the European Research Council (grant no.~864138 ``SingStochDispDyn").
T.O.~also 
acknowledges support from  
the NSFC (grant no.~W2531005).

\end{ackno}


\begin{thebibliography}{99}


\bibitem{BIT}
A.~Babin, A.~Ilyin, E.~Edriss, 
{\it On the regularization mechanism for the periodic Korteweg-de Vries equation}, 
Comm. Pure Appl. Math. 64 (2011), no. 5, 591--648. 

\bibitem{BOW}
E.~Ba\c{s}ako\u{g}lu, T.~Oh, Y.~Wang,
{\it  Sharp unconditional well-posedness of the 2-d periodic cubic hyperbolic nonlinear Schr\"odinger equation}, 
arXiv:2509.01650 [math.AP].

\bibitem{BO}
\'A.~B\'enyi, T.~Oh, 
{\it  Modulation spaces, Wiener amalgam spaces, and Brownian motions}, Adv. Math. 228 (2011), no. 5, 2943--2981. 


\bibitem{BL}
J.~Bergh, J.~L\"ofstr\"om, 
{\it Interpolation spaces. An introduction},  Grundlehren der Mathematischen Wissenschaften, No. 223. Springer-Verlag, Berlin-New York, 1976. x+207 pp.

\bibitem{BO93}
J.~Bourgain, 
{\it Fourier transform restriction phenomena for certain lattice subsets and applications to
nonlinear evolution equations. II. The KdV equation,} 
 Geom. Funct. Anal. 3 (1993), no. 3, 209--262. 
	

\bibitem{BO96}
J.~Bourgain,
{\it Invariant measures for the 2D-defocusing nonlinear Schr\"odinger equation}, 
Comm. Math. Phys. 176 (1996), no. 2, 421--445.


%
%

\bibitem{CLOZ}
A.~Chapouto, G.~Li, T.~Oh, T.~Zhao,
{\it  Shallow-water convergence of the intermediate long wave equation in $L^2$}, 
arXiv:2511.15905 [math.AP]. 

\bibitem{Chen}
X.~Chen, J.~Holmer, \emph{The derivation of the $\mathbb {T}^{3}$ energy-critical NLS from quantum many-body dynamics}, Invent. Math. 217 (2019), no. 2, 433--547.



\bibitem{CLO}
K.~Cheung, G.~Li, T.~Oh, 
{\it Almost conservation laws for stochastic nonlinear Schr\"odinger equations}, 
J. Evol. Equ. 21 (2021), no. 2, 1865--1894. 

\bibitem{CGKO}
J.~Chung, Z.~Guo, S.~Kwon,T.~Oh,  
{\it Normal form approach to global well-posedness of the quadratic derivative nonlinear Schr\"odinger equation on the circle}, Ann. Inst. H. Poincar\'e Anal. Non Lin\'eaire 34 (2017), 1273--1297.



\bibitem{CKSTT03}
J.~Colliander, M.~Keel, G.~Staffilani, H.~Takaoka, T.~Tao,
{\it Sharp global well-posedness for KdV and modified KdV on $\R$ and $\T$},
J. Amer. Math. Soc. 16 (2003), no. 3, 705--749.



\bibitem{CO}
J.~Colliander, T.~Oh, 
{\it  Almost sure well-posedness of the cubic nonlinear Schr\"odinger equation below $L^2(\T)$}, Duke Math. J. 161 (2012), no. 3, 367--414. 

\bibitem{DPD}
G.~Da Prato, A.~Debussche,
{\it Strong solutions to the stochastic quantization equations}, 
Ann. Probab. 31 (2003), no. 4, 1900--1916.



\bibitem{DZ}
 G.~Da Prato, J.~Zabczyk, 
 {\it Stochastic equations in infinite dimensions,} Second edition. Encyclopedia of Mathematics and its Applications, 152. Cambridge University Press, Cambridge, 2014. xviii+493 pp.


\bibitem{DD1}
A.~de Bouard,  A.~Debussche,
\textit{The stochastic nonlinear Schr\"odinger equation in $H^1$},
Stochastic Anal. Appl. 21 (2003), no. 1, 97--126.

\bibitem{DD2} 
A.~de Bouard, A.~Debussche, {\it The Korteweg-de Vries equation with multiplicative homogeneous noise,}
  Stochastic differential equations: theory and applications, Interdiscip. Math. Sci., 2, World Sci. Publ., Hackensack, NJ (2007),  113--133.



\bibitem{DDT1}
A.~de Bouard, 
A.~Debussche, Y.~Tsutsumi, 
{\it White noise driven Korteweg-de Vries equation}, J. Funct. Anal. 169 (1999), no. 2, 532--558.



\bibitem{DDT2}
A.~de Bouard, 
A.~Debussche, Y.~Tsutsumi, 
{\it Periodic solutions of the Korteweg-de Vries equation driven by white noise},
 SIAM J. Math. Anal. 36 (2004/05), no. 3, 815--855.









\bibitem{FOW}
J.~Forlano, T.~Oh, Y.~Wang,
{\it Stochastic nonlinear Schr\"odinger equation with almost space-time white noise}, J. Aust. Math. Soc. 109 (2020), no. 1, 44--67. 




\bibitem{Furioli}
G.~Furioli, F.~Planchon, E.~Terraneo, 
{\it Unconditional well-posedness for semilinear Schr\"odinger and wave equations in $H^s$},
 Harmonic analysis at Mount Holyoke (South Hadley, MA, 2001), 147--156,
Contemp. Math., 320, Amer. Math. Soc., Providence, RI, 2003. 



\bibitem{GTV}
J.~Ginibre, Y.~Tsutsumi, G.~Velo, 
{\it On the Cauchy problem for the Zakharov system},
 J. Funct. Anal. 151 (1997), no. 2, 384--436.



\bibitem{Gra}
L.~Grafakos, 
{\it Classical Fourier analysis}, Third edition. Graduate Texts in Mathematics, 249. Springer, New York, 2014. xviii+638 pp. 


\bibitem{GOSW}
D.~Greco, T.~Oh, P.~Sosoe, Y.~Wang,
{\it  Normal form approach to unconditional well-posedness of the periodic stochastic Korteweg-de Vries equation with an additive noise}, preprint.


\bibitem{Grun1}
A.~Gr\"unrock, 
{\it An improved local well-posedness result for the modified KdV equation},
 Int. Math. Res. Not. 2004, no. 61, 3287--3308. 

\bibitem{Grun2}
A.~Gr\"unrock, 
{\it Bi- and trilinear Schr\"odinger estimates in one space dimension with applications to cubic NLS and DNLS}, Int. Math. Res. Not. 2005, no. 41, 2525--2558.

\bibitem{GH}
A.~Gr\"unrock, S.~Herr, 
{\it Low regularity local well-posedness of the derivative nonlinear Schr\"odinger equation
with periodic initial data}, SIAM J. Math. Anal. 39 (2008), no. 6, 1890--1920.


\bibitem{Guo}
Z.~Guo,
{\it  Global well-posedness of Korteweg-de Vries equation in $H^{-3/4}(\R)$}, 
J. Math. Pures Appl. 91 (2009), no. 6, 583--597.



\bibitem{GKO}
Z.~Guo, S.~Kwon, T.~Oh, \emph{Poincar{\'e}-Dulac normal form reduction for unconditional well-posedness of the periodic cubic NLS}, Commun. Math. Phys. 322 (2013), no. 1, 19--48.

%
%




\bibitem{HS}
S.~Herr, V.~Sohinger, \emph{Unconditional uniqueness results for the nonlinear Schr{\"o}dinger equation}, Commun. Contemp. Math. 21 (2019), no. 7, 28 pp.


\bibitem{KATO}
T.~Kato, {\it On nonlinear Schr\"odinger equations. II. $H^s$-solutions and unconditional
well-posedness,} J. Anal. Math. 67 (1995), 281--306.



\bibitem{KT1}
T.~Kato, K.~Tsugawa, 
{\it Cancellation properties and unconditional well-posedness for the fifth order KdV type equations with periodic boundary condition}, Partial Differ. Equ. Appl. 5 (2024), no. 3, Paper No. 18, 55 pp.


\bibitem{KT2}
T.~Kato, K.~Tsugawa, 
{\it 
Cancellation properties and unconditional well-posedness for fifth order modified KdV type equations with periodic boundary conditions}, J. Differential Equations 450 (2026), Paper No. 113736, 70 pp.


\bibitem{KPV93}
C.E.~Kenig, G.~Ponce, L.~Vega,
{\it The Cauchy problem for the Korteweg-de Vries equation in Sobolev spaces of negative indices},
Duke Math. J. 71 (1993), no. 1, 1--21. 

\bibitem{KPV96}
C.E.~Kenig, G.~Ponce, L.~Vega,
{\it A bilinear estimate with applications to the KdV equation},
J. Amer. Math. Soc. 9 (1996), no. 2, 573--603.


\bibitem{KOPV}
R.~Killip, O.~Pocovnicu, T.~Oh, M.~Vi\c{s}an, 
{\it Global well-posedness of the Gross-Pitaevskii and cubic-quintic nonlinear Schr\"odinger equations with non-vanishing boundary conditions}, Math. Res. Lett. 19 (2012), no. 5, 969--986. 


\bibitem{KV}
R.~Killip, M.~Vi\c{s}an, 
{\it KdV is well-posed in $H^{-1}$}, Ann. of Math.  190 (2019), no. 1, 249--305. 

\bibitem{Kishi}
N.~Kishimoto,
{\it Well-posedness of the Cauchy problem for the Korteweg-de Vries equation at the critical regularity},
Differential Integral Equations 22 (2009), no. 5-6, 447--464.


\bibitem{Kishi21}
N.~Kishimoto, \emph{Unconditional local well-posedness for periodic NLS}, 
 J. Differential Equations 274 (2021), 766--787. 


\bibitem{KO}
S.~Kwon, T.~Oh, 
{\it  On unconditional well-posedness of modified KdV}, Int. Math. Res. Not. (2012), no. 15, 3509--3534. 


\bibitem{MS}
V.~Maz'ya, T.~Shaposhnikova, 
{\it Theory of Sobolev multipliers. With applications to differential and integral operators},  Grundlehren der mathematischen Wissenschaften [Fundamental Principles of Mathematical Sciences], 337. Springer-Verlag, Berlin, 2009. xiv+609 pp.

\bibitem{McK}
H.~P.~McKean,
{\it Statistical mechanics of nonlinear wave equations. IV. Cubic Schr\"odinger}, 
Comm. Math. Phys. 168 (1995), no. 3, 479--491.
{\it Erratum: ``Statistical mechanics of nonlinear wave equations. IV. Cubic Schr\"odinger''}, 
Comm. Math. Phys. 173 (1995), no. 3, 675.


\bibitem{Moli}
L.~Molinet,
{\it A note on ill posedness for the KdV equation},
Differential Integral Equations 24 (2011), no. 7-8, 759--765.


\bibitem{NTT}
K.~Nakanishi, H.~Takaoka, Y.~Tsutsumi, 
{\it Counterexamples to bilinear estimates related with the KdV equation and the nonlinear Schr\"odinger equation}, IMS Conference on Differential Equations from Mechanics (Hong Kong, 1999). Methods Appl. Anal. 8 (2001), no. 4, 569--578. 


\bibitem{Oh4}
T.~Oh, {\it Periodic stochastic Korteweg-de Vries equation with additive space-time white noise}, Anal. PDE 2 (2009), no. 3, 281--304. 


\bibitem{OQS}
T.~Oh, J.~Quastel, P.~Sosoe,
{\it  Global dynamics for the stochastic KdV equation with white noise as initial data},
 Trans. Amer. Math. Soc. Ser. B 11 (2024), 420--460. 



\bibitem{OW2}
T.~Oh, Y.~Wang, 
\emph{Normal form approach to the one-dimensional periodic cubic nonlinear Schr{\"o}dinger equation in almost critical Fourier--Lebesgue spaces}, J. Anal. Math. 143 (2021), no. 2, 723--762.






\bibitem{Tsu1}
K.~Tsugawa, 
{\it Global well-posedness for the KdV equations on the real line with low regularity forcing terms},
 Commun. Contemp. Math. 8 (2006), no. 5, 681--713. 



\bibitem{Zhou}
Y.~Zhou, 
{\it Uniqueness of weak solution of the KdV equation},
 Internat. Math. Res. Notices 1997, no. 6, 271--283.





	
	

	

	
	
\end{thebibliography}
\end{document}